\newtheorem{prop}{Proposition}[section]
\newtheorem{lemma}[prop]{Lemma}
\newtheorem{cor}[prop]{Corollary}
\newtheorem{theorem}[prop]{Theorem}
\theoremstyle{remark}
\newtheorem{rem}[prop]{Remark}
\newcommand{\eps}{\epsilon}
\newcommand{\ZZ}{\mathbb{Z}}
\newcommand{\RR}{\mathbb{R}}
\newcommand{\covering}{\mathcal{E}}
\newtheorem*{mainThmDisc}{Theorem \ref{mainThm}$^\prime$}
\title{On the discretized sum-product problem}
\author{
Larry Guth\thanks{Massachusetts Institute of Technology, Cambridge, MA, lguth@math.mit.edu. Supported by a Simons Investigator Award.}
\and
Nets Hawk Katz\thanks{California Institute of Technology, Pasadena CA, nets@caltech.edu.
Supported by NSF grant DMS 156590.}
\and
Joshua Zahl\thanks{University of British Columbia, Vancouver, BC, jzahl@math.ubc.ca. Supported by a NSERC Discovery Grant.}
}
\begin{document}

\maketitle
 \begin{abstract}
We give a new proof of the discretized ring theorem for sets of real numbers. As a special case, we show that if $A\subset\mathbb{R}$ is a $(\delta,1/2)_1$-set in the sense of Katz and Tao, then either $A+A$ or $A.A$ must have measure at least $|A|^{1-\frac{1}{68}}$. 
\end{abstract}

\section{Introduction}
A $(\delta,\sigma)_1$-set is a discretized analog of a $\sigma$-dimensional subset of $\RR$. More precisely, for $\delta>0$ we say that a set $A\subset\RR$ is $\delta$-discretized if it is a union of closed intervals of length $\delta$. We say that a $\delta$-discretized set $A\subset [1,2]$ is a $(\delta,\sigma)_1$-set if $|A|\approx \delta^{1-\sigma}$ and if it satisfies the non-concentration condition $|A\cap I|\lessapprox |I|^\sigma|A|$ for all intervals $I$.

In \cite{KT}, Katz and Tao conjectured that a $(\delta,1/2)_1$ set cannot be approximately closed under both addition and multiplication. Specifically, they conjectured that there exists an absolute constant $c>0$ so that if $A\subset[1,2]$ is a $(\delta,1/2)_1$-set, then 
\begin{equation}\label{BourgainBound}
|A+A|+|A.A|\gtrapprox \delta^{-c}|A|.
\end{equation}

This conjecture was proved by Bourgain \cite{B1}, who established \eqref{BourgainBound} whenever $A$ is a $(\delta,\sigma)_1$ set, $0<\sigma<1$. In \cite{B1}, the constant $c=c(\sigma)>0$ is not explicitly computed, but an examination of \cite{B1} suggests that the constant is very small. Subsequent work by Bourgain-Gamburd and Bourgain \cite{BG,B2}, and work in progress by Lindenstrauss-Varj\'u and Bateman-Lie proved variants of \eqref{BourgainBound} where the set $A$ satisfies weaker non-concentration conditions. 

In this paper, we obtain a short new proof of \eqref{BourgainBound} that establishes an explicit value of $c$.

\begin{theorem}\label{mainThm}
Let $0<\sigma<1$. Let $A\subset[1,2]$ be a $\delta$-discretized set of measure $\delta^{1-\sigma}$. Suppose that for all intervals $I$, we have the non-concentration estimate
\begin{equation}\label{nonConcentrationEstimate}
|A\cap I|\leq C|I|^{\sigma}|A|.
\end{equation}
Then for every $c<\frac{\sigma(1-\sigma)}{4(7+3\sigma)}$, we have
\begin{equation}\label{largeSumProduct}
|A+A| + |A.A| \gtrsim C^{-O(1)}\delta^{-c}|A|.
\end{equation}
\end{theorem}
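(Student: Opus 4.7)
The plan is to argue by contradiction: assume that $|A+A|,|A\cdot A|\le K|A|$ with $K=\delta^{-c}$, and derive a contradiction with \eqref{nonConcentrationEstimate} once $c$ exceeds the stated threshold. First, I would invoke the discretized Pl\"unnecke--Ruzsa inequalities to deduce that any bounded-length algebraic combination of $A$ with itself, such as $A+A\cdot A$, $(A-A)(A-A)$, or $A+A+A\cdot A$, has measure at most $K^{O(1)}|A|$. This reduces matters to exploiting a single nontrivial interaction between the additive and multiplicative structures and, via a Balog--Szemer\'edi--Gowers-type refinement, lets me pass to a large $\delta$-subset $A'\subset A$ on which both structures are simultaneously almost closed.

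Next, I would set up an Elekes-type incidence configuration. Let $P\subset\RR^2$ be the $\delta$-neighbourhood of $(A+A)\times (A\cdot A)$, so $|P|\lesssim K^2|A|^2$, and let $L=\{\ell_{a,b}:a,b\in A\}$ with $\ell_{a,b}=\{(x,y):y=a(x-b)\}$, so $|L|\approx|A|^2$. For each triple $(a,b,a')\in A^3$, the point $(a'+b,aa')$ lies on $\ell_{a,b}$ and in $P$; hence the number of $\delta$-incidences between $P$ and $L$ is at least $|A|^3$. The goal is to produce an upper bound on this incidence count that beats $|A|^3$ whenever $K$ is too small.

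The heart of the argument is a discretized Szemer\'edi--Trotter-type estimate. The non-concentration hypothesis \eqref{nonConcentrationEstimate}, transferred through the Pl\"unnecke step to $A+A$, $A\cdot A$ and to the direction set $A\times A$ parametrizing $L$, should yield an upper bound on $\delta$-incidences of the shape
\[
\delta^{-\alpha(\sigma)}\bigl(|P|^{2/3}|L|^{2/3}+|P|+|L|\bigr),
\]
for some explicit loss $\alpha(\sigma)$. Such a bound is not available off the shelf in the $\delta$-discretized category, and establishing it is where I expect the main difficulty to lie: $\delta$-tubes are fatter than lines, two distinct tubes can share many $\delta$-balls, and the usual cell-decomposition proof of Szemer\'edi--Trotter has to be adapted. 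I would try a cell-partition argument at a well-chosen intermediate scale $\rho\in(\delta,1)$, using non-concentration of the direction set to bound how many tubes can cluster inside one cell, possibly combined with a two-ends / broad-narrow dichotomy to extract tubes whose $P$-incidences are spread across many cells. Comparing the lower bound $|A|^3$ with the upper bound and rearranging gives $K\gtrsim\delta^{-c}$, and balancing $\alpha(\sigma)$ against the Pl\"unnecke factors and $\sigma$ should produce the stated exponent $c<\tfrac{\sigma(1-\sigma)}{4(7+3\sigma)}$; the specific denominator $4(7+3\sigma)$ hints that seven-fold sum-product combinations and the Elekes exponent $4/3$ are precisely what is being balanced.
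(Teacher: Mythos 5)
Your proposal takes a genuinely different route from the paper, and it contains a gap that is not merely a ``main difficulty'' but a likely circularity. The paper's proof does not use incidence geometry at all: it follows Garaev's finite-field sum--product argument, working directly with the quotient set $B=\frac{A-A}{A-A}$ (with denominators $\geq\delta^\gamma$) and splitting into a ``dense case'' (where $B$ fills out an interval) and a ``gap case'' (where some $b\in B$ has $b/2$ or $(b+1)/2$ far from $B$). Each case gives a lower bound on $K$ via Pl\"unnecke/Ruzsa applied to explicit combinations like $d_1 A + d_2 A + d_2 A$ with $d_i \in A-A$, together with a careful use of non-concentration to recover the loss from small denominators. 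The parameter $\gamma$ is then optimized against the two cases.

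The problem with your Elekes/Szemer\'edi--Trotter route is the step you yourself flag: a $\delta$-discretized Szemer\'edi--Trotter bound with an exponent anywhere near $2/3$ and only a small power loss $\delta^{-\alpha(\sigma)}$ is not available, and there is strong evidence (going back to the Katz--Tao paper \cite{KT} you would be invoking, and to Bourgain's work) that proving such a bound for $\delta$-tubes under non-concentration hypotheses is essentially \emph{equivalent} to, not easier than, the discretized sum--product theorem you are trying to prove. The obstructions are real: two $\delta$-tubes that are nearly parallel can share $\sim\delta^{-1}$ of each other's $\delta$-balls, and a cell decomposition at an intermediate scale $\rho$ does not control this clustering even with non-concentration of the direction set; fixing this by multi-scale ``two-ends'' bootstrapping is precisely what turns the argument into (a version of) Bourgain's original proof, whereas the whole point of the present paper is to avoid that machinery. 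In short, your outline reduces Theorem~\ref{mainThm} to an incidence estimate that is at least as hard as the theorem itself. A minor additional point: the denominator $4(7+3\sigma)$ does not arise from the Elekes $4/3$ exponent or a seven-fold sumset; in the paper it comes out of balancing a $K^{12}$ Pl\"unnecke loss in the dense case against a $K^{14}$ loss in the gap case (the $14$ arising from the iterated sumset $d_1A+d_2A+d_2A+d_2A$ needed to reach $(b+1)/2$), with the optimal $\gamma=\frac{7(1-\sigma)}{2(7+3\sigma)}$.
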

In \cite[Section 4]{B1}, Bourgain proved that Theorem \ref{mainThm} (with any value $c>0$) establishes the Erd\H{o}s-Volkmann ring conjecture \cite{EV}: there does not exist a measurable subring of the reals of Hausdorff dimension strictly between $0$ and $1$. This was first proved by Edgar and Miller in \cite{EM}. 

\begin{rem}
In \cite{BG}, Bourgain and Gamburd established a variant of Theorem \ref{mainThm} where the non-concentration hypothesis \eqref{nonConcentrationEstimate} was replaced by the weaker requirement
\begin{equation}\label{nonConcentrationEstimateWeaker}
|A\cap I|\leq C|I|^{\tau}|A|
\end{equation}
for some $\tau>0$ (in particular, $\tau$ may be smaller than $\sigma$). They obtained their result by reducing the general case $\tau>0$ to the special case $\tau=\sigma$, and then applying the previously established discretized sum-product theorem from \cite{B1}. A similar reduction can be used to obtain a variant of Theorem \ref{mainThm} where $A$ satisfies the weaker non-concentration hypothesis \eqref{nonConcentrationEstimateWeaker}. When this is done, the constant $c$ in \eqref{largeSumProduct} will depend on both $\sigma$ and $\tau$. 
\end{rem}

Our proof of Theorem \ref{mainThm} uses many of the ideas from Garaev's sum-product theorem in $\mathbb{F}_p$ from \cite{G}. In \cite{G}, Garaev proved that if $A\subset \mathbb{F}_p$ with $|A|<p^{7/13}(\log p)^{-4/13}$, then $|A+A|+|A.A|\gtrsim |A|^{15/14}/(\log |A|)^{2/7}$. By refining Garaev's arguments, the exponent of $15/14$ was improved to $14/13$ by Shen and the second author \cite{KS}; to $13/12$ by Bourgain and Garaev \cite{BGa}; and to $12/11$ by Rudnev \cite{Rudnev}.

 Glossing over several technical details, Garaev's proof proceeds as follows. Either $\frac{A-A}{A-A}=\mathbb{F}_p$, or there is an element $x\in \frac{A-A}{A-A}$ so that $x+1\not\in \frac{A-A}{A-A}$. If the former occurs, then Pl\"unnecke's inequality (discussed further in Section \ref{PlunneckeSection} below) implies that $|A+A|+|A.A|$ must be large. In our proof, we will call this situation the ``dense case.'' If the latter occurs, then we can write $x = \frac{a-b}{c-d}$, where $a,b,c,d\in A$. Since $x+1\not\in\frac{A-A}{A-A}$, we have $|A+(x+1)A| = |A|^2$, and this in turn implies that $|(a-b)A+(c-d)A+(c-d)A|\geq|A|^2$. Pl\"unnecke's inequality now implies that $|A+A|+|A.A|$ must be large. In our proof, we will call this situation the ``gap case.'' 

 When mimicking Garaev's argument for discretized subsets of $\RR$, we run into several issues. First, if $A\subset[1,2]$ is a $\delta$-discretized set, then the ``denominators'' in the set $\frac{A-A}{A-A}$ might be very small. Rather than considering the entire set $\frac{A-A}{A-A}$, we pick a cutoff and look at quotients where the denominator is not too small.  Several steps in the argument have to be tuned or adjusted to take account of the size of the denominator. 

 If the set of quotients is sufficiently dense, then we proceed as in the ``dense case'' of Garaev's argument. Otherwise, there is a fairly large gap in our set of quotients.  Suppose that we had an element $x$ in our set of quotients and $x+1$ was in the middle of the fairly large gap.  Then we could adapt Garaev's argument from the ``gap case'' described above. Unfortunately, there is no guarantee that this will happen.  Instead, we show that if our set of quotients has a fairly large gap, we can find a $x$ in our set of quotients so that either $x/2$ or $(x+1)/2$ is in the middle of the large gap, and we can use this element in the same way that Garaev uses $x+1$.

\subsection{Notation}

If $X\subset\RR$ and $t>0$, we will write $\covering_t(X)$ to denote the $t$-covering number of $X$, that is, the cardinality of the smallest covering of $X$ by intervals of length $t$. We will write ${\cal N}_t(X)$ to denote the closed $t$ neighborhood of $X$, and we will write $X_t$ to denote ${\cal N}_t(X)\cap t\ZZ$; this is the discretization of $X$ at scale $t$. We say a set $X\subset\RR$ is $t$-separated if every pair of points in $X$ have distance at least $t$. In particular, the set $X_t$ is always $t$-separated.

With this terminology, we will state an equivalent formulation of Theorem \ref{mainThm} that will be slightly easier to work with
\begin{mainThmDisc}
Let $0<\sigma<1$. Let $A\subset[1,2]$ be a $\delta$-separated set of cardinality $\delta^{-\sigma}$. Suppose that for all intervals $I$of length at least $\delta$, we have the non-concentration estimate
\begin{equation}\label{nonConcentrationEstimateDisc}
\#(A\cap I)\leq C|I|^{\sigma}(\#A).
\end{equation}
Then for every $c<\frac{\sigma(1-\sigma)}{4(7+3\sigma)}$, we have
\begin{equation}\label{largeSumProductDisc}
\covering_\delta(A+A) + \covering_\delta(A.A) \gtrsim C^{-O(1)}\delta^{-c}(\#A).
\end{equation}
\end{mainThmDisc}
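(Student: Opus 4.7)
I would argue by contradiction: assume $\covering_\delta(A+A)+\covering_\delta(A\cdot A)\leq K\cdot(\#A)$ with $K=\delta^{-c}$, and try to force $c\geq\frac{\sigma(1-\sigma)}{4(7+3\sigma)}$. A discretized version of Pl\"unnecke--Ruzsa (the continuous analog of the $\mathbb{F}_p$ statement behind Garaev's proof) then yields $\covering_\delta(P(A))\leq K^{O(1)}(\#A)$ for any bounded sum-product polynomial $P$ in a few copies of $A$; this will be the recurring upper bound.

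Following the introduction, fix a parameter $\eta\in(0,1)$ and introduce the quotient set
\[
Q_\eta=\bigl\{(a-b)/(c-d):a,b,c,d\in A,\ |c-d|\geq\delta^\eta\bigr\}.
\]
After dyadic pigeonholing to fix the size of $|c-d|$, quotients in $Q_\eta$ are meaningful down to scale $\delta^{1-\eta}$. I would then run a dichotomy on $Q_\eta$ at this scale: either (\emph{dense case}) $Q_\eta$ has $\delta^{1-\eta}$-covering number comparable to the length of the ambient interval $[-\delta^{-\eta},\delta^{-\eta}]$, or (\emph{gap case}) there is a subinterval of length $\gtrsim\delta^{\rho}$ disjoint from $Q_\eta$ for a small $\rho$ to be optimized.

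In the dense case, each $x=(a-b)/(c-d)\in Q_\eta$ satisfies $(c-d)(A+xA)\subset A(A-A)+A(A-A)$, so by Pl\"unnecke $A+xA$ has $\delta^{1-\eta}$-covering number $\leq K^{O(1)}(\#A)$. Combining this across a maximal $\delta^{1-\eta}$-separated subfamily of $x\in Q_\eta$ and counting incidences for the map $(a,a',x)\mapsto a+xa'$ against the non-concentration hypothesis forces $K$ to be large. In the gap case, choose $x\in Q_\eta$ with either $x/2$ or $(x+1)/2$ lying in the middle of the gap; this is always possible because those two affine maps jointly cover the ambient range. Assuming the latter alternative, $(x+1)/2$ is at distance $\gtrsim\delta^\rho$ from $Q_\eta$, so the map $(a,a')\mapsto a+\tfrac{x+1}{2}a'$ is essentially injective on $A\times A$ at scale $\delta^{1-\eta}$, giving $\covering_{\delta^{1-\eta}}\bigl(A+\tfrac{x+1}{2}A\bigr)\gtrsim(\#A)^2$. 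Clearing denominators via $x=(a-b)/(c-d)$ promotes this to a lower bound on the $\delta$-covering number of a bounded sum-product combination of $A$, contradicting the Pl\"unnecke upper bound unless $K$ is large.

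Balancing the two lower bounds on $K$ by optimizing in $\eta$ and $\rho$ should produce the claimed exponent. The principal obstacle I anticipate is the discretized bookkeeping: checking that the non-concentration hypothesis on $A$ descends to a usable non-concentration statement on $Q_\eta$ (so the dichotomy genuinely separates a dense from a gappy regime), and keeping the scales $\delta$, $\delta^{1-\eta}$, $\delta^\eta$ consistent through the Pl\"unnecke, rescaling, and pigeonhole steps, so that the accumulated $K^{O(1)}$ losses stay inside the budget set by the target exponent $\frac{\sigma(1-\sigma)}{4(7+3\sigma)}$.
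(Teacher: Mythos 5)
Your high-level architecture matches the paper's --- the quotient set, the dense/gap dichotomy, and especially the key trick of choosing $x\in B$ with $x/2$ or $(x+1)/2$ landing in the gap (the paper proves this dichotomy by an induction showing that if it fails, every dyadic rational of height $\leq m$ lies within $2s$ of $B$, hence $B$ is dense). But the heart of your gap case has a concrete error. You claim that the map $(a,a')\mapsto a+\tfrac{x+1}{2}a'$ is essentially injective on $A\times A$ at scale $\delta^{1-\eta}$ and hence $\covering_{\delta^{1-\eta}}(A+\tfrac{x+1}{2}A)\gtrsim(\#A)^2=\delta^{-2\sigma}$. This is impossible for most $\sigma$: the set $A+\tfrac{x+1}{2}A$ lives in an interval of length $O(1)$, so its $\delta^{1-\eta}$-covering number is at most $O(\delta^{-(1-\eta)})$, which is smaller than $\delta^{-2\sigma}$ whenever $\sigma>\tfrac{1-\eta}{2}$. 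The reason injectivity fails is that a collision $a_1+\tfrac{x+1}{2}a_4=a_2+\tfrac{x+1}{2}a_3+O(\delta^{1-\eta})$ places $(a_1-a_2)/(a_3-a_4)$ near $-\tfrac{x+1}{2}$ only when $|a_3-a_4|$ is not too small; when $|a_3-a_4|<\delta^\eta$ the quotient is excluded from $Q_\eta$ and the gap says nothing. There are genuinely many such near-diagonal quadruples, and their count (controlled by the non-concentration hypothesis, not by the gap) is exactly what determines the eventual exponent. The paper's gap-case lower bound is $\covering_\delta(e_1A_2+e_2A_2)\gtrsim\delta^{-\gamma\sigma}|e_2|^\sigma(\#A')$ up to $K,\rho,\log$ factors --- far below $(\#A)^2$.

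A second, quieter gap: you invoke ``a discretized Pl\"unnecke--Ruzsa'' to bound $\covering_\delta(P(A))\leq K^{O(1)}(\#A)$ for bounded sum--product combinations $P$, but this does not follow from $\covering_\delta(A+A)+\covering_\delta(A\cdot A)\leq K(\#A)$ alone. The paper spends Section 2 (the tree-regularization Lemma \ref{Treeprocessing}) extracting a subset whose sumset is controlled at \emph{every} scale $t\in[\delta,1]$, precisely because the later iterated Pl\"unnecke (Lemma \ref{sizeOfd1d2AIterated}) involves rescaled copies $d_2A$ with $|d_2|$ as small as $\delta^\gamma$, and controlling $\covering_\delta(d_2A+d_2A)=\covering_{\delta/|d_2|}(A+A)$ requires the multi-scale hypothesis. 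It also constructs the intermediate set $A_1$ via a popular element $b$ and Cauchy--Schwarz to convert the multiplicative hypothesis into additive control of $aA\pm bA$. Without these steps the $K^{O(1)}$ losses in your recurring upper bound are unjustified. Finally, your claim $(c-d)(A+xA)\subset A(A-A)+A(A-A)$ changes the scale: you then need the $\delta$-covering of $(c-d)A+(a-b)A$, not the $\delta^{1-\eta}$-covering of $A+xA$, and the conversion between the two (the factor $|c-d|^\sigma$ in the paper's Lemma \ref{sizeOfd1d2A}) again uses non-concentration and is a load-bearing ingredient, not bookkeeping.
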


\subsection{Thanks}
The authors would like to thank Brendan Murphy, Victor Lie, and Jianan Li for comments and corrections to a previous draft of this manuscript. The authors would also like to thank the anonymous referees for corrections and suggestions.  


\section{Preprocessing of $\delta$-separated sets with good additive properties.} \label{TreeSection}

In this section, we deal with the following technical problem. Let $X \subset [1,2]$ be a $\delta$-separated set. We will be in the situation where $X$ has good additive properties at scale $\delta$ namely,
$$\covering_\delta(X+X) \leq K (\#X).$$
We would like to know that in absolute terms, the set $X$ has the same
additive properties at each scale $t > \delta$, that is
$$\covering_t(X+X) \leq K \covering_t(X).$$
This is not necessarily true about $X$ but is true about a fairly large subset. The following lemma, which is closely related to Lemma 5.2 from \cite{BGS} will make this statement precise.

\begin{lemma} \label{Treeprocessing} Let $X \subset [1,2]$ be a $\delta$-separated subset and suppose $\#X = \delta^{-\sigma}$ for some $0<\sigma<1$.
Suppose that $$\covering_{\delta}(X+X) \leq K (\# X).$$
 Then for every $\epsilon>0$, there is a subset $\tilde X \subset X$ with 
$$\# \tilde X \gtrsim \delta^{\epsilon}(\# X),$$
so that
\begin{equation}\label{goodCoveringEveryScale}
\covering_t(\tilde X + \tilde X) \lesssim  \delta^{-10\epsilon} K \covering_t(\tilde X)\quad\textrm{for all}\ \delta<t<1,
\end{equation}
with the implicit constants depending on $\sigma$ and $\epsilon$.
\end{lemma}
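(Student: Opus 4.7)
The plan is to use a multi-scale tree-pruning argument, modeled on [BGS, Lemma~5.2] cited by the authors. First reduce to dyadic scales: setting $J := \lceil\log_2(1/\delta)\rceil$ and $t_j := 2^j\delta$ for $0\leq j\leq J$, it suffices to establish \eqref{goodCoveringEveryScale} at these $J{+}1$ dyadic scales, since $\covering_t(\cdot)$ varies by at most a constant factor across factor-of-$2$ changes in $t$. Organize $X$ into a depth-$J$ rooted dyadic tree whose nodes at depth $j$ are the nonempty intersections $X\cap I$ with $I$ a $t_j$-interval, parenthood by containment; subsets $\tilde X\subset X$ then correspond to subtrees.

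For a subtree $\tilde X$ and scale $t_j$, define the defect $D_j(\tilde X) := \covering_{t_j}(\tilde X+\tilde X)/(K\covering_{t_j}(\tilde X))$. The hypothesis gives $D_0(X)\leq 1$, and trivially $D_J = O(1)$; the goal is to find $\tilde X$ with $D_j(\tilde X)\leq \delta^{-10\epsilon}$ for every $j$ and $\#\tilde X\gtrsim \delta^\epsilon N$. I would iterate: starting from $Y := X$, whenever some scale $t_j$ has $D_j(Y)>\delta^{-10\epsilon}$, pigeonhole by partitioning $Y$ into its $\covering_{t_j}(Y)$ many $t_j$-clusters and passing to a carefully chosen union of clusters $Y'\subset Y$ whose sumset mass is concentrated on ``heavy'' $t_j$-balls of $Y+Y$ (i.e., throwing away those $t_j$-cluster pairs whose sum hits an under-populated $t_j$-ball in $Y+Y$). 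The per-scale cardinality budget is $\delta^{\epsilon/J} = 2^{-\epsilon}$, given the total allowance $\delta^\epsilon$ spread over $J\sim\log(1/\delta)$ scales, and this is matched by the $\delta^{-10\epsilon}$ slack in the defect: each factor of defect improvement trades against a bounded factor of cardinality loss.

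The main obstacle is showing that refining at scale $t_j$ does not worsen the defect at the other scales. For coarser scales $j'>j$, this is automatic --- passing to a subset cannot increase any covering number, so coarse-scale defects can only improve. For finer scales $j'<j$, one must verify that the pigeonhole respects the fine-scale cluster structure; the standard device is to choose a monotone energy functional like $\sum_j \log^+\!\bigl(D_j(Y)/\delta^{-10\epsilon}\bigr)$ that the pigeonhole strictly decreases at each step, guaranteeing termination in $O(J)$ rounds with the cumulative cardinality loss bounded by $\delta^\epsilon$ after tuning. The $10\epsilon$ exponent in the conclusion (as opposed to $\epsilon$) is precisely the slack needed to absorb the implicit constants in this energy-monotonicity accounting and in the conversion from cluster-pigeonhole to sumset-covering estimates.
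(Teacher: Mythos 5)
Your proposal takes a genuinely different route from the paper, and unfortunately it has a gap that I do not see how to fill.

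The central assertion that makes your iteration go through is: ``For coarser scales $j'>j$, this is automatic --- passing to a subset cannot increase any covering number, so coarse-scale defects can only improve.'' This is false. Passing from $Y$ to a subset $Y'$ decreases both $\covering_{t_{j'}}(Y'+Y')$ and $\covering_{t_{j'}}(Y')$, and the defect $D_{j'}$ is their ratio; there is no reason for the ratio to decrease. (A toy example: $Y=\{0,1,2,3\}$ has ratio $7/4$, while the subset $\{0,2,3\}$ has ratio $6/3=2$.) So refining at scale $t_j$ can spoil defects at every other scale, coarse and fine alike, and your iteration has no monotone quantity to appeal to. The proposed energy functional $\sum_j\log^+(D_j/\delta^{-10\epsilon})$ is not shown to decrease under your pigeonhole step, and without the monotonicity claim above I do not believe it does; this is the crux of the lemma, not a ``standard device.'' A secondary problem is quantitative: you budget a loss of only $\delta^{\epsilon/J}=2^{-\epsilon}$ per level with $J\sim\log_2(1/\delta)$ levels, but any pigeonhole step that throws away the ``light'' $t_j$-balls costs at least a constant factor (typically a factor of $2$, or $\log(1/\delta)$ if you dyadically pigeonhole a ratio), which vastly overshoots $2^{-\epsilon}$ once $\epsilon$ is small.

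The paper's proof avoids both problems by a different design. It does not iterate on defects at all: it works with $2^j$-adic intervals for a \emph{large} fixed base $2^j$, so that the tree has only $m\approx\log(1/\delta)/j$ levels, and does a single bottom-up pass that makes the tree \emph{uniform} --- at each level $l$ it pigeonholes on the branching number (an integer in $\{1,\dots,2^j\}$, split into $j$ dyadic ranges, a loss of at most $2j$ per level), so the total loss is $(2j)^m\leq\delta^{-\epsilon}$ once $j$ is large in terms of $\sigma,\epsilon$. Uniformity is the structural property that then gives the covering estimate at \emph{every} scale simultaneously, with no iteration needed: each surviving interval at level $l$ contains $\gtrsim\delta^{2\epsilon}(\#X)/(\#Z_l)$ points of $\tilde X$, so if $\#(Z_l+Z_l)$ were too large, summing over cells would produce too many $\delta$-separated points of $X+X$, contradicting the $\delta$-scale hypothesis directly. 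I'd suggest re-deriving the lemma along these lines: make the object uniform first, and let the covering bound fall out as a consequence, rather than trying to repair the covering bound scale by scale.
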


\begin{proof}
We will pick $j$, a large natural number depending only on $\sigma$ and $\epsilon$. Without loss of generality, we may assume that $\delta=2^{-mj}$ for $m$ a natural number. Indeed, reducing to this case will only affect our constants by factor of $2^{O(j)}$; since $j$ depends only on $\sigma$ and $\epsilon$, this is acceptable. 

We will subdivide the interval
$[1,2]$ into the $2^j$-adic intervals. That is, for each natural number $0 \leq l \leq m$, we let
$${\cal D}_l([1,2])= \{ [1+k 2^{-lj},\ 1+(k+1) 2^{-lj}]  :  0 \leq k \leq 2^{lj} -1 \}.$$
The collection of intervals 
$${\cal D}=\bigcup_{l=0}^m  {\cal D}_l([1,2])$$
is a tree with $m+1$ levels under containment.  We will find our subset $\tilde X$ by following an algorithm that goes up the tree. We let
$X_l$ be the set of intervals in ${\cal D}_l([1,2])$ which contain a point of $X$. We now will pick subsets $Z_l$ of each $X_l$, starting with
$Z_m=X_m$. 

Next we observe that once $Z_{l+1}$ has been chosen we have that
$$\#Z_{l+1} =\sum_{I \in X_l}  \#\{J \in Z_{l+1}: J \subset I\}.$$
We know that each positive summand is between 1 and $2^j$. Thus
$$\#Z_{l+1} =\sum_{k=1}^j \sum_{\substack{I \in X_l\\ 2^{k-1} \leq \#\{J \in Z_{l+1}: J \subset I\}<2^k}} \#\{J \in Z_{l+1}: J \subset I\}.$$
We pick the value of $k$ contributing most to the sum and let 
$$Z_l=\{I \in X_l: 2^{k-1} \leq \#\{J \in Z_{l+1}: J \subset I\} < 2^k \}.$$

What we have basically done is found a large piece of $X$ which is an essentially uniform tree. We keep track of our losses. We let
$\tilde X_l$ be the set of $x \in X$ so that $x$ is in an interval of $Z_k$ for each $k >l$. We have that
$$\#\tilde X_l  \geq {1 \over (2j)^{m-l}} (\#X).$$
We observe that by making $j$ large enough, we have ensured that $(2j)^m  \leq 2^{jm \epsilon} = \delta^{-\epsilon}$. Because
a fortiori, we also have $2^m \leq \delta^{-\epsilon}$, we know that each interval in $Z_l$ contains at least $\delta^{2\epsilon} {\#X\over \#Z_l}$
elements of $X$ (recall that these intervals have length $2^{-lj}$). Thus each interval in $Z_l+Z_l$ of length $2^{-lj+1}$ contains at least $\delta^{2\epsilon} {\#X\over \#Z_l}$  elements of $X+X$ which are 
$\delta$-separated. Thus we must have
$$\#(Z_l+Z_l) \lesssim \delta^{-10 \epsilon} K (\#Z_l),$$
because otherwise $\covering_{\delta}(X+X)\gtrsim \delta^{-8\epsilon} K (\#X),$ a contradiction. We let
$$\tilde X = \bigcap \tilde X_l,$$
and the lemma is proved.

\end{proof}

\section{Pl\"unnecke's inequality and its implications}\label{PlunneckeSection}
In this section we will begin the proof of Theorem \ref{mainThm}$^\prime$. Let $A\subset[1,2]$ be a $\delta$-separated set of cardinality $\delta^{-\sigma}$ that satisfies \eqref{nonConcentrationEstimateDisc}. Fix a number $c<\frac{\sigma(1-\sigma)}{4(7+3\sigma)}$, and let $\eps>0$ be a small constant to be chosen later. In the arguments below, we will see terms of the form $\delta^{O(\eps)}$. The implicit constant will depend only on $\sigma$. 

Apply Lemma \ref{Treeprocessing} to $A$ with $\eps$ as above to obtain a set $A^\prime$ that satisfies \eqref{goodCoveringEveryScale}. Define $K$ by
$$K={\covering_\delta(A^\prime+A^\prime)+\covering_\delta(A^\prime.A^\prime) \over 
(\#A^\prime)}.$$ 
Then we have that $\covering_\delta(A^\prime+A^\prime)+\covering_\delta(A^\prime.A^\prime) \leq K(\#A^\prime)$; our goal will be to obtain a lower bound for $K$. Because
$\#(A^{\prime}) \gtrsim \delta^{O(\epsilon)} \#A,$ and, of course, 
$$A^{\prime} + A^{\prime} \subset A+A,$$
and
$$A^{\prime} . A^{\prime} \subset A . A,$$
an appropriate lower bound on $K$ implies inequality \eqref{largeSumProductDisc}.

Our next task is to find a large subset $A_1\subset A^{\prime}$ that has small expansion (in the
$\delta$-covering sense) under certain types of repeated addition and multiplication. Our main tool will be Pl\"unnecke's inequality:

\begin{prop}[Pl\"unnecke]\label{AbelianPlunnecke}
Let $G$ be an Abelian group and let $X,Y_1,\ldots,Y_k$ be subsets of $G$. Suppose that $\#(X+Y_i)\leq K_i(\#X)$ for each $i=1,\ldots,k$. Then there exists a subset $X^\prime\subset X$ so that
$$
\#(X^\prime+ Y_1 +\ldots+ Y_k) \leq \big(\Pi_{i=1}^k K_i\big) (\#X^\prime).
$$
\end{prop}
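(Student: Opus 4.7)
The plan is to follow Petridis's short and elegant approach to Plünnecke-type inequalities. The central tool is Petridis's lemma: for finite subsets $A, B$ of an abelian group $G$, if $A^* \subset A$ is a nonempty subset minimizing $|A' + B|/|A'|$ over all nonempty $A' \subset A$, with minimum value $K^*$, then for every finite $C \subset G$,
$$|A^* + B + C| \leq K^* |A^* + C|.$$
I would prove this lemma first, by induction on $|C|$. The base case $|C| = 1$ is immediate from the definition of $K^*$. For the inductive step, writing $C = C_0 \sqcup \{c\}$ and defining the ``captured'' subset $D = \{a \in A^* : a + B + c \subset A^* + B + C_0\}$, the new elements of $A^* + B + C$ beyond $A^* + B + C_0$ are contained in $(A^* \setminus D) + B + c$; comparing with the analogous decomposition of $A^* + C$ relative to $A^* + C_0$ and invoking the minimality of $K^*$ on appropriate subsets of $A^*$ yields the claim.

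Given the lemma, I would prove the proposition by induction on $k$. The base case $k = 1$ is immediate by taking $X' = X$. For the inductive step, apply Petridis's lemma to the pair $(X, Y_k)$ to produce a subset $X^* \subset X$ satisfying $|X^* + Y_k + Z| \leq K_k |X^* + Z|$ for every finite $Z$. Setting $Z = Y_1 + \ldots + Y_{k-1}$ reduces the problem to bounding $|X^* + Y_1 + \ldots + Y_{k-1}|$ in terms of $|X^*|$, and the inductive hypothesis (applied appropriately to $Y_1, \ldots, Y_{k-1}$) is meant to handle the right-hand side.

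The main obstacle is that a naive induction requires $|X^* + Y_i| \leq K_i |X^*|$ for each $i < k$, and this does not follow automatically from $|X + Y_i| \leq K_i |X|$ when $X^*$ is strictly smaller than $X$ (otherwise one would pick up a spurious factor of $|X|/|X^*|$). I would circumvent this by strengthening the inductive statement to include a universal $C$-parameter, namely: there exists $X' \subset X$ such that $|X' + Y_1 + \ldots + Y_k + C| \leq K_1 \cdots K_k |X' + C|$ for every finite $C \subset G$. This stronger form composes cleanly with Petridis's lemma, so that each inductive step contributes exactly one factor $K_i$ and the subset shrinks monotonically without spurious inflation. Arranging this telescoping carefully—so that the bounds from Petridis's lemma at the current step slot in with the bounds from the previous step via the ``for every $C$'' clause—is the technically delicate part of the argument.
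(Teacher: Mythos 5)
The paper does not prove Proposition~\ref{AbelianPlunnecke}; it simply cites Pl\"unnecke, Ruzsa, and Petridis. So the right comparison is with the standard literature, and against that your sketch has a genuine gap, precisely at the point you flag as ``technically delicate.'' The strengthened ``for every $C$'' statement does not make the induction close, for the following reason. Suppose the inductive hypothesis yields $X_0\subset X$ with $\#(X_0+Y_1+\cdots+Y_{k-1}+C)\le K_1\cdots K_{k-1}\,\#(X_0+C)$ for all $C$. When you now run Petridis inside $X_0$ with $Y_k$, the minimizing ratio $\min_{Z\subset X_0}\#(Z+Y_k)/\#Z$ is not controlled by $K_k$, because $X_0$ is a subset of $X$ and all you know is $\#(X+Y_k)\le K_k\,\#X$. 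If instead you run Petridis on $(X,Y_k)$ first to keep the ratio $\le K_k$ and then feed the resulting $X_1$ into the inductive step, you lose control of $K_1,\ldots,K_{k-1}$ on $X_1$ for the identical reason, and moreover the two Petridis bounds live on nested but different subsets, so the telescoping $\#(X'+Y_k+\cdots)\le\kappa\,\#(X'+Y_1+\cdots+Y_{k-1}+C)\le\kappa K_1\cdots K_{k-1}\#(X_0+C)$ produces $\#(X_0+C)$ on the right where you need $\#(X'+C)$.

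This failure is not merely a presentational issue: the Petridis minimizer really can violate the desired conclusion. Take $X=\{0,1,2,10\}\subset\mathbb Z$, $Y_1=\{0,1\}$, $Y_2=\{0,10\}$, so $K_1=\#(X+Y_1)/\#X=3/2$ and $K_2=\#(X+Y_2)/\#X=7/4$, hence $K_1K_2=21/8$. The (unique) nonempty subset of $X$ minimizing $\#(Z+Y_1)/\#Z$ is $X^*=\{0,1,2\}$ with ratio $4/3$, but $\#(X^*+Y_2)/\#X^*=6/3=2>K_2$ and $\#(X^*+Y_1+Y_2)/\#X^*=8/3>21/8$; the conclusion holds instead with $X'=X$. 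So an induction whose inductive step commits to a one-variable Petridis minimizer cannot work. The iterated single-summand case $Y_1=\cdots=Y_k$ is exactly the case where your scheme is fine, because there $\#(X^*+Y_1)$ is controlled by the definition of the minimizer itself. For genuinely different $Y_i$ one needs an additional symmetrization or tensor-power ingredient: Ruzsa's proof passes to the product group $G^{k}$ and to a Pl\"unnecke (commutative) graph built from the $k$ cyclic reorderings of $B_1\times\cdots\times B_k$, using that magnification ratios multiply under graph products so that the optimizing subset may be taken to be a box $Z^k$; Petridis's paper likewise contains an ingredient beyond the key lemma to handle the different-summands case. Your write-up does not contain that ingredient, and the sentence asserting the telescoping ``slots in'' is where the argument breaks.
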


An inequality of this form was first proved by Pl\"unnecke \cite{P}. The current formulation is due to Ruzsa \cite{R}. More recently, Petridis \cite{Pe} obtained a short and elementary proof of Pl\"unnecke's inequality. 

Observe that if $X,Y_1,\ldots,Y_k$ are subsets of $G$ with $\#(X+Y_i)\leq K_i(\#X)$ for each $i=1,\ldots,k$, then whenever $X_0\subset X$ with $\#X_0\geq(\#X)/2$, we have $\#(X_0+Y_i)\leq 2K_i(\#X_0)$ for each $i=1,\ldots,k$. By repeatedly applying this observation to the set $X_0 = X\backslash X^\prime$ that is ``left over'' after applying Proposition \ref{AbelianPlunnecke}, we can obtain the following slight strengthening of Pl\"unnecke's inequality:

\begin{cor}\label{AbelianPlunneckeCor}
Let $G$ be an Abelian group and let $X,Y_1,\ldots,Y_k$ be subsets of $G$. Suppose that $\#(X+Y_i)\leq K_i(\#X)$ for each $i=1,\ldots,k$. Then there exists a subset $X^\prime\subset X$ with $\#X^\prime\geq (\#X)/2$ so that
$$
\#(X^\prime + Y_1  +  \ldots+ Y_k)\lesssim \big(\Pi_{i=1}^k K_i\big) (\#X^\prime).
$$
\end{cor}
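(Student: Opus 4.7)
The plan is to iterate Proposition~\ref{AbelianPlunnecke}, removing a ``good'' subset at each stage and re-applying the proposition to the leftover piece until the leftover has shrunk below $(\#X)/2$. The corollary then follows by taking $X'$ to be the union of all the good subsets extracted along the way and using the fact that the sumset of a union is the union of sumsets.

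More precisely, I would define a sequence of sets as follows. Set $X_0 = X$, and suppose we have constructed $X_{i-1}\subset X$ with $\#X_{i-1}\geq (\#X)/2$. Since $X_{i-1}\subset X$, we have for each $j$,
$$\#(X_{i-1}+Y_j)\leq \#(X+Y_j)\leq K_j(\#X)\leq 2K_j(\#X_{i-1}),$$
which is exactly the observation recorded just before the corollary. Apply Proposition~\ref{AbelianPlunnecke} to $X_{i-1}$ with sumset constants $2K_j$; this produces a nonempty subset $X^{(i)}\subset X_{i-1}$ with
$$\#\bigl(X^{(i)}+Y_1+\cdots+Y_k\bigr)\leq 2^k\Bigl(\prod_{j=1}^k K_j\Bigr)(\#X^{(i)}).$$
Set $X_i = X_{i-1}\setminus X^{(i)}$ and repeat. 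Since each $X^{(i)}$ is nonempty, this process must terminate at some finite step $N$, at which point $\#X_N<(\#X)/2$.

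Now let $X' = X\setminus X_N = \bigsqcup_{i=1}^{N} X^{(i)}$; then $\#X'\geq (\#X)/2$ by construction. For the sumset estimate, observe that
$$X' + Y_1 + \cdots + Y_k \;=\; \bigcup_{i=1}^{N}\bigl(X^{(i)}+Y_1+\cdots+Y_k\bigr),$$
so that
$$\#(X'+Y_1+\cdots+Y_k)\leq \sum_{i=1}^{N}\#\bigl(X^{(i)}+Y_1+\cdots+Y_k\bigr)\leq 2^k\Bigl(\prod_{j=1}^k K_j\Bigr)\sum_{i=1}^N \#X^{(i)} = 2^k\Bigl(\prod_{j=1}^{k} K_j\Bigr)(\#X'),$$
which is the desired bound (the factor $2^k$ is absorbed into the implicit constant, since $k$ is fixed in the statement of the corollary).

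There is no real obstacle here: the proof is essentially the pigeonhole/greedy idea flagged in the paragraph preceding the corollary, and the only thing to keep track of is that each iteration degrades the sumset constants by at most a factor of $2$, which is harmless. The one point worth being careful about is verifying that the leftover set $X_{i-1}$ really does inherit the hypothesis $\#(X_{i-1}+Y_j)\lesssim K_j(\#X_{i-1})$ at each stage; this is precisely the halving observation and is what forces the threshold to be $(\#X)/2$ rather than a smaller fraction.
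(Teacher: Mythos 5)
Your proof is correct and follows exactly the greedy/iterative scheme sketched in the paragraph preceding the corollary: repeatedly apply Proposition~\ref{AbelianPlunnecke} to the leftover set, using the observation that any subset of $X$ of size $\geq (\#X)/2$ still satisfies the hypothesis with constants $2K_j$, and stop once the (disjoint) union of extracted good pieces reaches half of $X$. Your bookkeeping---disjointness of the $X^{(i)}$, termination, the factor $2^k$ being absorbed into the $\lesssim$ since the implicit constant may depend on $k$---is all sound and matches the paper's intent.
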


We will also need Ruzsa's triangle inequality:

\begin{prop}[Ruzsa triangle inequality]\label{Triangle inequality} Let $G$ be an Abelian group and let $X,Y,Z \subset G$ be finite subsets. Then
$$\#(X-Z) \leq {\#(X-Y)\ \#(Y-Z) \over \#Y}.$$ \end{prop}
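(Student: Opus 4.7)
The plan is to prove the Ruzsa triangle inequality by constructing an explicit injection that witnesses the desired cardinality bound. The key identity to exploit is that for any $d \in X-Z$ and any $y \in Y$, we have $d = (x - y) + (y - z)$ whenever $d = x - z$ with $x \in X$, $z \in Z$. Rearranged, this says that every element of $X - Z$ can be broken up across $Y$ into a pair from $(X - Y) \times (Y - Z)$. Multiplying the inequality through by $\#Y$, what we really want to show is
$$\#(X-Z) \cdot \#Y \leq \#(X-Y) \cdot \#(Y-Z),$$
which is naturally suited to an injection argument from the left-hand set to the right-hand set.

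First I would fix, for each element $d \in X - Z$, a distinguished representation $d = x_d - z_d$ with $x_d \in X$ and $z_d \in Z$ (using the axiom of choice for finite sets, i.e.\ just picking one representative per element). Then I would define a map
$$\phi : (X - Z) \times Y \longrightarrow (X - Y) \times (Y - Z), \qquad \phi(d, y) = (x_d - y,\ y - z_d).$$
The image clearly lies in $(X-Y) \times (Y-Z)$, so it only remains to check injectivity.

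To check that $\phi$ is injective, suppose $\phi(d, y) = (a, b)$. Then $a + b = x_d - z_d = d$, so $d$ is recovered as the sum of the two coordinates; this in turn determines $x_d$ and $z_d$ by our fixed choice of representative; and finally $y = x_d - a$ is recovered. Hence $\phi$ is injective, which yields $\#(X - Z) \cdot \#Y \leq \#(X - Y) \cdot \#(Y - Z)$, and dividing by $\#Y$ gives the stated inequality.

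I do not anticipate a real obstacle here: the whole argument rests on the elementary observation $x - z = (x - y) + (y - z)$ combined with the bookkeeping trick of fixing $(x_d, z_d)$ before introducing $y$, so that the decoding step $d \mapsto (x_d, z_d)$ is well defined. The one thing to be careful about is the order of operations in defining $\phi$: the representative $(x_d, z_d)$ must be chosen as a function of $d$ alone (not of $y$), so that recovering $d$ from $a + b$ is enough to recover the representative and hence $y$.
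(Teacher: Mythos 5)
Your proof is correct and is essentially the paper's argument made explicit: the paper observes that each $s = x - z \in X - Z$ (with a fixed representative) has at least $\#Y$ representations as a sum from $(X-Y) + (Y-Z)$ via $s = (x-y) + (y-z)$, which is exactly the content of your injection $\phi(d,y) = (x_d - y,\ y - z_d)$. You have simply spelled out the bookkeeping (fixing $(x_d, z_d)$ as a function of $d$ alone, then decoding) that the paper leaves implicit.
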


\begin{proof}  Let $s=x-z \in X-Z$. Then there are at least $\#Y$ distinct representations of $s$ as a sum of an element of $X-Y$ with an
element of $Y-Z$. Namely, $s= (x-y) + (y-z)$ for each $y \in Y$. \end{proof}

If $X\subset [1,2]$ is a  set, we will call a set $X^\prime$ a $\delta$-\emph{refinement} of $X$ if  $X^\prime\subset X$, and $\covering_{\delta}(X^\prime) \geq \covering_\delta(X)/2$.
We shall extend Proposition \ref{AbelianPlunnecke}, Corollary \ref{AbelianPlunneckeCor} and Proposition \ref{Triangle inequality} to the
$\delta$-covering setting, by replacing any set $X$ by $X_{\delta}$ and observing that $\#X_{\delta} \sim \covering_\delta(X)$.

\begin{cor}\label{discretizedPlunnecke}
Let $X,Y_1,\ldots,Y_k$ be  subsets of $\RR$. Suppose that 
$\covering_\delta(X+Y_i)\leq K_i\covering_\delta(X)$ for each $i=1,\ldots,k$. Then there is a $\delta$-refinement $X^\prime$ of $X$ so that 
$$
\covering_\delta(X^\prime + Y_1 + Y_2  + \ldots + Y_k)\lesssim\big(\Pi_{i=1}^k K_i\big)    \covering_\delta(X^\prime).
$$
In particular,
$$
\covering_\delta(Y_1 + Y_2  + \ldots + Y_k)\lesssim\big(\Pi_{i=1}^k K_i \big)   \covering_\delta(X).
$$
\end{cor}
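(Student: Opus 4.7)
The plan is to reduce to the group-theoretic Pl\"unnecke inequality (Corollary \ref{AbelianPlunneckeCor}) by passing to the discretization of each set at scale $\delta$, applying the abelian version in the group $\delta\ZZ$, and then lifting the resulting refined subset back to $X$. The key enabling fact is that for any $S\subset\RR$ one has $\#S_\delta\sim\covering_\delta(S)$, since $S_\delta$ is a $\delta$-separated set contained in ${\cal N}_\delta(S)$ and conversely $S\subset{\cal N}_{2\delta}(S_\delta)$.

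First I would set $\tilde X = X_\delta$ and $\tilde Y_i = (Y_i)_\delta$. The inclusions $\tilde X + \tilde Y_i \subset \delta\ZZ \cap {\cal N}_{2\delta}(X+Y_i)$ and $X+Y_i\subset{\cal N}_{2\delta}(\tilde X+\tilde Y_i)$ give the two-sided estimate $\#(\tilde X+\tilde Y_i)\sim \covering_\delta(X+Y_i)$, and in particular the hypothesis translates to $\#(\tilde X + \tilde Y_i)\lesssim K_i\,\#\tilde X$ for each $i$. Applying Corollary \ref{AbelianPlunneckeCor} in the abelian group $\delta\ZZ$ to $\tilde X,\tilde Y_1,\ldots,\tilde Y_k$ then produces a subset $\tilde X''\subset \tilde X$ with $\#\tilde X''\geq(\#\tilde X)/2$ such that
$$\#(\tilde X'' + \tilde Y_1 + \cdots + \tilde Y_k)\lesssim\bigl(\textstyle\prod_i K_i\bigr)\#\tilde X''.$$

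Next I would lift $\tilde X''$ back to $X$: for each $p\in\tilde X''$, choose $\phi(p)\in X$ with $|\phi(p)-p|\le\delta$, and set $X'=\phi(\tilde X'')$. Then $X'\subset X$ and $\tilde X''\subset{\cal N}_\delta(X')$; since $\tilde X''$ is $\delta$-separated, this forces $\covering_\delta(X')\gtrsim\#\tilde X''\gtrsim\covering_\delta(X)$, so $X'$ is indeed a $\delta$-refinement of $X$. For the sum, any element of $X'+Y_1+\cdots+Y_k$ lies within $(k+1)\delta$ of a point of $\tilde X''+\tilde Y_1+\cdots+\tilde Y_k$, so $\covering_\delta(X'+Y_1+\cdots+Y_k)\lesssim\#(\tilde X''+\tilde Y_1+\cdots+\tilde Y_k)$, and combining with the previous estimate yields the desired bound (with the $k$-dependent slack absorbed into the implicit constant, as $k$ is fixed). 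The "in particular" conclusion follows by translating: fixing any $x_0\in X'$, the set $Y_1+\cdots+Y_k+x_0$ is a translate of $Y_1+\cdots+Y_k$ sitting inside $X'+Y_1+\cdots+Y_k$, so $\covering_\delta(Y_1+\cdots+Y_k)\leq\covering_\delta(X'+Y_1+\cdots+Y_k)\lesssim(\prod_iK_i)\covering_\delta(X)$.

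The only real technical point is the $O(1)$-slack introduced each time we pass between a set and its $\delta$-discretization; because the arithmetic operations involved are $+$ applied a bounded number of times, these slacks compound only to an absolute constant, so nothing blows up. I do not anticipate any conceptual obstacle — the entire argument is a careful but routine bookkeeping exercise in translating between the $\delta$-covering formulation and the discrete cardinality formulation in $\delta\ZZ$.
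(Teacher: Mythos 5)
Your proposal is correct and follows essentially the same route as the paper: discretize $X$ and the $Y_i$ at scale $\delta$, apply Corollary \ref{AbelianPlunneckeCor} to the discretized sets, and lift the resulting refinement of $X_\delta$ back to a $\delta$-refinement of $X$. The only (inessential) difference is that you lift by choosing one representative $\phi(p)\in X$ per point $p$ of the refined discrete set, whereas the paper takes $X'={\cal N}_\delta(R)\cap X$; both give the needed $\delta$-refinement with the same constants.
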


\noindent To obtain Corollary \ref{discretizedPlunnecke}, replace each of the sets $X,Y_1,\ldots,Y_k$ by $X_{\delta}, (Y_1)_{\delta}, \ldots, (Y_{k})_{\delta}$. Observe that for any subset $Z$ of $X_{\delta}$, we have
$$\#\big(Z + (Y_1)_{\delta} + \ldots + (Y_{k})_{\delta}\big)  \sim \covering_\delta(Z+Y_1+ \ldots + Y_k),$$
with the implicit constant depending on $k$. Apply Corollary \ref{AbelianPlunneckeCor} to the finite sets $X_{\delta},$ $(Y_1)_{\delta}, \ldots,$ $(Y_{k})_{\delta}$, and let $R$ be the resulting refinement of $X_{\delta}$. Finally, take $X^{\prime}$ to be the set
${\cal N}_{\delta} (R) \cap X$.

In the same way, we obtain the following $\delta$-covering version of the triangle inequality.

\begin{prop} \label{discretizedTriangle}
Let $X,Y,Z$ be subsets of $\RR$. Then 
$$\covering_\delta(X-Z) \lesssim {\covering_\delta(X-Y)\ \covering_\delta(Y-Z) \over \covering_\delta(Y)}.$$ \end{prop}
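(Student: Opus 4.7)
The plan is to deduce Proposition \ref{discretizedTriangle} from the finitary Ruzsa triangle inequality (Proposition \ref{Triangle inequality}) by the same discretization trick the authors just used to get Corollary \ref{discretizedPlunnecke}. Concretely, I would replace $X$, $Y$, $Z$ by their $\delta$-scale discretizations $X_\delta$, $Y_\delta$, $Z_\delta$, which live in $\delta\ZZ$, and apply Proposition \ref{Triangle inequality} in the Abelian group $\RR$ (or even in $\delta\ZZ$) to these finite sets.

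The key bookkeeping step is the comparison $\covering_\delta(U - V) \sim \#(U_\delta - V_\delta)$ for any subsets $U, V \subset \RR$. One direction is immediate: every element of $U_\delta - V_\delta$ lies within $2\delta$ of an element of $U - V$, so $U_\delta - V_\delta$ together with the intervals of length $\delta$ centered on its points covers $U-V$ up to a harmless dilation, giving $\covering_\delta(U-V) \lesssim \#(U_\delta - V_\delta)$. For the other direction, if $\{u_i - v_i\}_{i=1}^N$ is a maximal $\delta$-separated subset of $U_\delta - V_\delta$, then $N \sim \#(U_\delta - V_\delta)$, and after perturbing each $u_i$, $v_i$ by at most $\delta$ we obtain $N$ points of $U - V$ that are pairwise separated by at least $\delta/C$, which forces $\covering_\delta(U-V) \gtrsim N$. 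The same comparison with $U = V$ gives $\covering_\delta(Y) \sim \#Y_\delta$.

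Granted these comparisons, Proposition \ref{Triangle inequality} applied to the finite sets $X_\delta, Y_\delta, Z_\delta \subset \RR$ gives
\[
\#(X_\delta - Z_\delta) \leq \frac{\#(X_\delta - Y_\delta)\ \#(Y_\delta - Z_\delta)}{\#Y_\delta},
\]
and translating each factor back through $\covering_\delta(\cdot) \sim \#(\cdot)_\delta$ yields the claim, with implicit constants absorbing the bounded loss.

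I do not expect any genuine obstacle here; the only thing to be mindful of is keeping the $\delta$-covering/$\delta$-discretization equivalence honest, since $(X - Y)_\delta$ is not literally equal to $X_\delta - Y_\delta$ (they differ by a bounded-scale neighborhood). Because the finitary triangle inequality has no refinement step, there is no analog of the ${\cal N}_\delta(R) \cap X$ manipulation that was needed for Plünnecke; the argument is a clean one-line reduction.
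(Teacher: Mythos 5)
Your approach is the paper's own: the authors dispose of Proposition \ref{discretizedTriangle} with the single line ``In the same way,'' meaning the same discretization used for Corollary \ref{discretizedPlunnecke} --- pass to $X_\delta, Y_\delta, Z_\delta$, apply the finitary Ruzsa inequality, and translate back via $\covering_\delta(U-V)\sim\#(U_\delta - V_\delta)$. You also correctly observe that no refinement step (no ${\cal N}_\delta(R)\cap X$ manipulation) is needed here.

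One local step in your justification of $\#(U_\delta - V_\delta)\lesssim\covering_\delta(U-V)$ would fail as written. Since $U_\delta - V_\delta\subset\delta\ZZ$ it is already $\delta$-separated, so your ``maximal $\delta$-separated subset'' is all of it; but two $\delta$-separated differences $u_i - v_i$ and $u_j - v_j$ can become equal after each of $u_i,v_i,u_j,v_j$ is moved by up to $\delta$, so the perturbed points need not be $\delta/C$-separated. The fix is routine: either start from a maximal $5\delta$-separated subset (which still has cardinality $\gtrsim\#(U_\delta - V_\delta)$ and whose perturbations stay $\delta$-separated), or argue directly that $U_\delta - V_\delta$ sits in the $2\delta$-neighborhood of $U-V$, hence is covered by $\covering_\delta(U-V)$ intervals of length $5\delta$, each containing $O(1)$ points of $\delta\ZZ$. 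With that repair your proof is complete and identical in spirit to the paper's.
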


We are now ready to proceed.
Observe that for any $x \in A^{\prime}$, we have that
$$\#\big((x A^{\prime})_\delta\big) \sim \covering_\delta(x A^{\prime}).$$
First, by Cauchy-Schwarz, the condition $\covering_\delta(A^{\prime}.A^{\prime}) \leq K(\#A^{\prime})$ implies that

$$ \sum_{x,y \in A^{\prime}} \#\big((xA^{\prime})_{\delta} \cap (yA^{\prime})_{\delta}\big)
\gtrsim (\#A^{\prime})^3 K^{-1}. $$

\noindent Select an element $b \in A^{\prime}$ so that

\begin{equation}\label{popularB}
\sum_{x \in A^{\prime}}  \#\big((xA^{\prime})_{\delta} \cap (bA^{\prime})_{\delta}\big)
\gtrsim (\#A^{\prime})^2 K^{-1}.
\end{equation}

\noindent By dyadic pigeonholing, we can select a set $\bar A \subset A^{\prime}$ and a number $K^{-1} \le \rho \le 1$ with $\#\bar A \gtrsim (\log K) \rho (\#A^{\prime})\gtrsim |\log\delta|^{-1} \rho (\#A^{\prime})$ so that

$$ \#\big( (aA^{\prime})_{\delta} \cap (b A^{\prime})_{\delta}\big)\sim (\#A^{\prime}) K^{-1} \rho^{-1} \quad\ \textrm{for each}\ a\in \bar A.$$

At the end of our argument, we will see that the worst-case occurs when $\rho=1$. Thus a casual reader may safely set $\rho=1$. 
We will consider the $\delta$-covering number
$\covering_\delta(a A^{\prime} \pm b A^{\prime})$ when $a \in \bar A$.  Let $X =( aA^{\prime})_{\delta} \cap (bA^{\prime})_{\delta}$, so 
$\#X \ge K^{-1} \rho^{-1} (\#A^{\prime})$. We have

\begin{equation*}
\begin{split}
\covering_\delta(X + a A^{\prime}) & \lesssim K (\# A^{\prime}) \leq K^2 \rho (\#X),\\
\covering_\delta(X + b A^{\prime}) & \lesssim K (\#A^{\prime}) \leq K^2 \rho (\#X).
\end{split}
\end{equation*}

\noindent By Corollary \ref{discretizedPlunnecke}, we obtain

\begin{equation}\label{aApbA} 
\covering_\delta\big(aA^{\prime} + bA^{\prime}\big) \lesssim K^4 \rho^2 (\#X) \lesssim K^3 \rho (\#A^{\prime}).
\end{equation}

\noindent By Proposition \ref{discretizedTriangle}, substituting $aA^{\prime}$ for $X$, $b A^{\prime}$ for $Z$ and $-X$ for $Y$, we obtain

\begin{equation}\label{aAmbA} 
\covering_\delta\big(aA^{\prime} - bA^{\prime}\big) \lesssim K^4 \rho^2 (\#X) \lesssim K^3 \rho (\#A^{\prime}).
\end{equation}

\noindent Combining the results (\ref{aApbA}) and (\ref{aAmbA}), we obtain

\begin{equation}\label{aApmbA} 
\covering_\delta\big(aA^{\prime} \pm bA^{\prime}\big)\lesssim K^4 \rho^2 (\#X) \lesssim K^3 \rho(\#A^{\prime}).
\end{equation}

So far elements of $\bar A$ have good properties for multiplying $A^{\prime}$ and adding such
dilates, but we would like to take advantage as well of the additive properties of $\bar A$. To wit,
we have that $\bar A + \bar A \subset A^{\prime} + A^{\prime}$ so that
$$\covering_\delta(\bar A+ \bar A) \leq \covering_\delta(A^{\prime} + A^{\prime}) \leq K (\#A^{\prime})
\leq K |\log\delta|\rho^{-1} (\#\bar A). $$

However, we might have liked to have estimates on $\covering_t(\bar A + \bar A)$ for $t > \delta$. To
fulfill our desires, we apply Lemma \ref{Treeprocessing} to $\bar A$ with $\eps$ as above to obtain a set $A_1=\tilde {\bar A}.$
We immediately obtain the estimate
$$\covering_t(A_1+A_1) \leq K |\log\delta| \rho^{-1} \delta^{-O(\epsilon)} \covering_t(A_1)\quad\textrm{for all}\ \delta<t<1.$$

In the next two lemmas, we will obtain estimates for the cardinality of various sums involving $A_1$ and $A^{\prime}$. These lemmas constitute an ingredient of our argument that does not occur in Garaev's finite field argument. We lose some resolution
when we multiply $A^{\prime}$ by a number much smaller than 1 and then calculate $\delta$-covering number, but we win some of this
back by considering nonconcentration for $A^{\prime}$.

\begin{lemma}\label{sizeOfd1d2A}
Let $d_1 = a_1-b_1,\ d_2 = a_2-b_2$, with $a_1,a_2,b_1,b_2\in A_1$. Then
\begin{equation}\label{d1A1d2A2}
\covering_\delta(d_1A^{\prime}+d_2A^{\prime})\lesssim C\delta^{-\eps}K^{12}\rho^4\max(|d_1|,|d_2|)^{\sigma}(\#A^{\prime}).
\end{equation}
\end{lemma}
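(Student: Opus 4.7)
The plan is to combine a four‑term Pl\"unnecke bound (which delivers the $K^{12}\rho^{4}$ factor) with the non‑concentration of $A^{\prime}$ inherited from $A$ (which delivers the $\max(|d_{1}|,|d_{2}|)^{\sigma}$ factor). Since $(a_{i}-b_{i})x = a_{i}x - b_{i}x$, we have the containment $d_{1}A^{\prime}+d_{2}A^{\prime} \subset a_{1}A^{\prime}-b_{1}A^{\prime}+a_{2}A^{\prime}-b_{2}A^{\prime}$. Because $a_{1},b_{1},a_{2},b_{2}\in A_{1}\subset\bar A$, the estimates $\covering_{\delta}(bA^{\prime}\pm a_{i}A^{\prime}),\,\covering_{\delta}(bA^{\prime}\pm b_{i}A^{\prime})\lesssim K^{3}\rho\,\#A^{\prime}$ from \eqref{aApmbA} feed into Corollary~\ref{discretizedPlunnecke} with $X=bA^{\prime}$ and $Y_{1},\ldots,Y_{4}=a_{1}A^{\prime},-b_{1}A^{\prime},a_{2}A^{\prime},-b_{2}A^{\prime}$, giving $\covering_{\delta}(d_{1}A^{\prime}+d_{2}A^{\prime})\lesssim K^{12}\rho^{4}\#A^{\prime}$. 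This is the claimed inequality with $t^{\sigma}$ replaced by $1$, where $t=\max(|d_{1}|,|d_{2}|)$.

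To sharpen by the factor $t^{\sigma}$, I would set $|d_{1}|=t$ (WLOG) and rescale via $\covering_{\delta}(d_{1}A^{\prime}+d_{2}A^{\prime}) = \covering_{r}(A^{\prime}+eA^{\prime})$ with $r=\delta/t$ and $e=d_{2}/d_{1}\in[-1,1]$. Under this identification the target becomes $\covering_{r}(A^{\prime}+eA^{\prime})\lesssim K^{12}\rho^{4}r^{-\sigma}$, which is precisely the scale predicted by non-concentration: the inherited bound $\#(A^{\prime}\cap I)\leq C\delta^{-O(\eps)}|I|^{\sigma}\#A^{\prime}$ already yields the matching lower bound $\covering_{r}(A^{\prime}+eA^{\prime})\gtrsim r^{-\sigma}$. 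The plan is then to replay the Pl\"unnecke chain at the coarser scale $r$: Lemma~\ref{Treeprocessing} applied to $A$ gives $\covering_{r}(A^{\prime}+A^{\prime})\lesssim K\delta^{-O(\eps)}\covering_{r}(A^{\prime})$, and combining this with Proposition~\ref{discretizedTriangle} (Ruzsa's triangle inequality) and the bounds from \eqref{aApmbA} should yield the scale-$r$ analogues $\covering_{r}(bA^{\prime}\pm a_{i}A^{\prime})\lesssim K^{O(1)}\rho\delta^{-O(\eps)}\covering_{r}(A^{\prime})$; a second application of Corollary~\ref{discretizedPlunnecke} then produces $\covering_{r}(A^{\prime}+eA^{\prime})\lesssim K^{12}\rho^{4}\delta^{-O(\eps)}\covering_{r}(A^{\prime})$.

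The main obstacle is then the upper bound $\covering_{r}(A^{\prime})\lesssim C\delta^{-O(\eps)}r^{-\sigma}$ needed to finish the chain: non-concentration provides only the matching lower bound, and a generic $(\delta,\sigma)_{1}$-set can have $\covering_{r}(A^{\prime})$ much larger than $r^{-\sigma}$. I would try to close this gap either by invoking the uniform-tree structure built into $A^{\prime}$ by Lemma~\ref{Treeprocessing}, which by construction forces $A^{\prime}$ to essentially saturate the non-concentration estimate at every dyadic scale, or by reorganizing the scale-$r$ Pl\"unnecke chain so that $\covering_{r}(A^{\prime})$ appears only as a factor that can be cancelled against the lower bound $r^{-\sigma}$ at the end, rather than being estimated in absolute terms.
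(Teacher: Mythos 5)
Your first step is correct and matches the paper: using $d_1A' + d_2A' \subset a_1A' - b_1A' + a_2A' - b_2A'$ together with \eqref{aApmbA} and Corollary~\ref{discretizedPlunnecke} does give the cruder bound $\covering_\delta(d_1A'+d_2A')\lesssim K^{12}\rho^4(\#A')$. But the route you propose for recovering the extra factor $\max(|d_1|,|d_2|)^\sigma$ has a genuine gap, and you have located it yourself: you would need the \emph{upper} bound $\covering_r(A')\lesssim \delta^{-O(\eps)}r^{-\sigma}$ with $r=\delta/\max(|d_1|,|d_2|)$, and this is simply false for general $(\delta,\sigma)_1$-sets. For instance, $\delta^{-\sigma}$ points spread $\delta^\sigma$-separated in $[1,2]$ satisfy the non-concentration hypothesis \eqref{nonConcentrationEstimateDisc}, yet $\covering_{\delta^\sigma}$ of this set is $\delta^{-\sigma}$, far exceeding $(\delta^\sigma)^{-\sigma}=\delta^{-\sigma^2}$. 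Lemma~\ref{Treeprocessing} does not repair this: it regularizes the branching of a tree and controls the expansion ratio $\covering_t(\tilde X+\tilde X)/\covering_t(\tilde X)$ at every scale, but it places no absolute upper bound on $\covering_t(\tilde X)$, and in the example above the tree is already uniform, so the lemma changes nothing.

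The paper's proof avoids this entirely by \emph{adding} an extra term $bA''$ (with $A''$ a refinement of $A'$ coming from Corollary~\ref{discretizedPlunnecke}) into the sumset and comparing covering numbers at two scales. One bounds $\covering_\delta(bA''+d_1A'+d_2A')\lesssim K^{12}\rho^4(\#A')$ from above via the Pl\"unnecke chain, and then observes that since $d_1A'+d_2A'$ is contained in an interval of length $d\lesssim\max(|d_1|,|d_2|)$, each $d$-separated translate coming from $bA''$ contributes an essentially disjoint copy, so $\covering_\delta(bA''+d_1A'+d_2A')\gtrsim \covering_d(bA'')\cdot\covering_\delta(d_1A'+d_2A')$. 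Here the non-concentration hypothesis is used to give a \emph{lower} bound $\covering_d(bA'')\gtrsim C^{-1}\delta^\eps d^{-\sigma}$, which is exactly what the hypothesis supplies. Dividing gives the factor $d^\sigma$. Your second suggested fix (``reorganize so that $\covering_r(A')$ can be cancelled against the lower bound'') is in the right spirit, but the specific maneuver --- inserting $bA''$ into the sum and exploiting the small ambient interval of $d_1A'+d_2A'$ --- is the missing idea, and without it the argument does not close.
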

\begin{proof}
By Corollary \ref{discretizedPlunnecke} and \eqref{aApmbA}, there is a refinement $A^{\prime \prime}$ of $A^{\prime}$ so that 
\begin{equation}\label{bd1Ad2A}
\covering_\delta(bA^{\prime \prime}+d_1A^{\prime}+d_2A^{\prime}) \leq \covering_\delta(bA^{\prime \prime}+
a_1A^{\prime}-b_1A^{\prime}+a_2A^{\prime}- b_2 A^{\prime}) \lesssim K^{12}\rho^4(\#A^{\prime}),
\end{equation}
where $b\in A^{\prime}$ is the element satisfying \eqref{popularB}.

Next, observe that $d_1A^{\prime}+ d_2A^{\prime}$ is contained in an interval of length $d \lesssim \max(|d_1|,|d_2|)$. Thus
$$
\covering_\delta(d_1A^{\prime} + d_2A^{\prime})\lesssim \big(\covering_d(bA^{\prime \prime})\big)^{-1}\ \covering_\delta(bA^{\prime \prime}+d_1A^{\prime} + d_2A^{\prime})
\lesssim \big(\covering_d(bA^{\prime \prime})\big)^{-1} K^{12}\rho^4 (\#A^{\prime}).
$$

Since $\#A^{\prime \prime}  \sim \#A^{\prime}$ and $A^{\prime}$ satisfies the non-concentration estimate 
$\#(A^{\prime} \cap I)\lesssim C\delta^{-\eps}d^{\sigma}(\#A^{\prime})$ for each interval $I$ of length $d$, following from the
comparison of the size of $A^{\prime}$ to that of $A$ and the non-concentration estimate for $A$ given in 
(\ref{nonConcentrationEstimateDisc}), we must have that 
$$\covering_d(b A^{\prime \prime})  \gtrsim \frac{\#A^{\prime \prime}}{C \delta^{-\eps}d^{\sigma}(\#A^{\prime})}=C^{-1}\delta^{\eps} d^{-\sigma}.$$ 
This completes the proof of the lemma.
\end{proof}

\begin{lemma}\label{sizeOfd1d2AIterated}
Let $d_1 = a_1-b_1,\ d_2 = a_2-b_2$, with $a_1,a_2,b_1,b_2\in A_1$. Then for each $k\geq 2$, there is a  set $A_2\subset A_1$ with $\#A_2\geq(\#A_1)/4$ so that
\begin{equation}\label{d1A2d2kA2}
\covering_\delta\big(d_1A_2+ \underbrace{d_2A_2 +\ldots + d_2A_2}_{\text{$k$ times}}\big) \lesssim C \delta^{-O(\eps)} 
|\log\delta|^{O(1)}K^{11+k}\rho^{5-k} \max(|d_1|,|d_2|)^{\sigma}(\#A^{\prime}). 
\end{equation}
The implicit constant in the $|\log\delta|^{O(1)}$ and $\delta^{-O(\eps)}$ terms depend on $k$.
\end{lemma}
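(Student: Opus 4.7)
The plan is to deduce Lemma \ref{sizeOfd1d2AIterated} from a single application of the discretized Pl\"unnecke inequality (Corollary \ref{discretizedPlunnecke}), with Lemma \ref{sizeOfd1d2A} supplying the ``cross'' estimate between $d_1 A$ and $d_2 A$. Concretely, I would take the base set $X = d_2 A_1$ and auxiliary sets $Y_1 = \cdots = Y_{k-1} = d_2 A_1$, $Y_k = d_1 A_1$, so that $X + Y_1 + \cdots + Y_k = k\cdot d_2 A_1 + d_1 A_1$. Letting $X' \subset d_2 A_1$ be the $\delta$-refinement produced by Pl\"unnecke and defining $A_2 \subset A_1$ by $d_2 A_2 = X'$, one has $X' + Y_1 + \cdots + Y_k \supset k d_2 A_2 + d_1 A_2$ (using $A_2 \subset A_1$), so the Pl\"unnecke bound on the former also bounds $\covering_\delta(k d_2 A_2 + d_1 A_2)$.

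The two Pl\"unnecke input factors have distinct origins. The \emph{doubling} factor for $d_2 A_1$ comes from the good additive properties of $A_1$ at scale $\delta/|d_2|$ (generically in $[\delta,1]$): by scaling,
$$
\covering_\delta(d_2 A_1 + d_2 A_1) = \covering_{\delta/|d_2|}(A_1 + A_1) \lesssim K|\log\delta|\rho^{-1}\delta^{-O(\eps)}\covering_{\delta/|d_2|}(A_1) = K|\log\delta|\rho^{-1}\delta^{-O(\eps)}\covering_\delta(d_2 A_1),
$$
giving $K_+ = K|\log\delta|\rho^{-1}\delta^{-O(\eps)}$ for each of the $k-1$ copies of $d_2 A_1$ among the $Y_i$. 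The \emph{cross} factor for $d_1 A_1$ is supplied by Lemma \ref{sizeOfd1d2A}: $\covering_\delta(d_2 A_1 + d_1 A_1) \lesssim C\delta^{-\eps}K^{12}\rho^4 M^\sigma(\#A')$ with $M = \max(|d_1|,|d_2|)$. Feeding these into Corollary \ref{discretizedPlunnecke} and using $\covering_\delta(X') \leq \covering_\delta(X)$ produces
$$
\covering_\delta(k d_2 A_2 + d_1 A_2) \lesssim K_+^{k-1} \cdot \covering_\delta(d_2 A_1 + d_1 A_1) \lesssim C K^{11+k}\rho^{5-k}|\log\delta|^{k-1}\delta^{-O(\eps k)}M^\sigma(\#A'),
$$
which is the claimed bound (with the $|\log\delta|^{O(1)}$ and $\delta^{-O(\eps)}$ factors absorbing the $k$-dependence).

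The main obstacle is ensuring $\#A_2 \geq \#A_1/4$. The Pl\"unnecke refinement gives only $\covering_\delta(X') \geq \covering_\delta(d_2 A_1)/2$, and since $\covering_\delta(d_2 A_1) = \covering_{\delta/|d_2|}(A_1)$ may be strictly smaller than $\#A_1$ when $|d_2| < 1$, the pullback $A_2 = X'/d_2$ does not automatically inherit a cardinality comparable to $A_1$. Bridging this gap requires the non-concentration estimate for $A_1$ (ultimately inherited from the hypothesis on $A$): it controls how many $A_1$-points can lie in each $\delta/|d_2|$-interval corresponding to a covering-unit of $X'$, and thereby promotes the $\delta$-covering lower bound on $X'$ to a cardinality lower bound on $A_2$. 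A smaller secondary concern is the edge regime $|d_2| \to \delta$, where the scale $\delta/|d_2|$ approaches $1$ and the good additive properties of $A_1$ at that scale degenerate; the resulting losses are absorbed into the $\delta^{-O(\eps)}$ and $|\log\delta|^{O(1)}$ factors allowed by the statement.
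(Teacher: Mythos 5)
Your proposal takes a genuinely different---and cleaner---route to the bound than the paper does, and the exponent bookkeeping is correct. The paper does not invoke Lemma \ref{sizeOfd1d2A} as a black box. Instead it keeps the auxiliary set $bA^{\prime\prime}$ attached to the sum throughout the argument, applies Pl\"unnecke twice (once to produce $A^{\prime\prime}$, once to combine the $(k-1)$-fold iterated $d_2$-sum with the anchored set $bA^{\prime\prime}+d_1A_1$), and only at the very end ``divides out'' the factor $\covering_d(bA^{\prime\prime}) \gtrsim C^{-1}\delta^{\eps}d^{-\sigma}$ using the non-concentration hypothesis, exactly as in the proof of Lemma \ref{sizeOfd1d2A}. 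You instead feed $\covering_\delta(d_2A_1+d_1A_1)\leq\covering_\delta(d_2A^\prime+d_1A^\prime)$ from Lemma \ref{sizeOfd1d2A} directly into a single application of Corollary \ref{discretizedPlunnecke} as the ``cross'' bound $K_k$. Since the Pl\"unnecke output is multiplied by $\covering_\delta(X^\prime)\leq\covering_\delta(d_2A_1)$, the denominator in $K_k$ cancels, recovering $\covering_\delta(d_1A^\prime+d_2A^\prime)$ itself and producing the $\max(|d_1|,|d_2|)^\sigma$ factor for free. The advantage is that the non-concentration step is used only once, inside Lemma \ref{sizeOfd1d2A}, and the bound $\rho^{5-k}(\#A^\prime)$ comes out directly without needing to convert between $\#A_1$ and $\#A^\prime$.

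However, the way you propose to handle the cardinality condition $\#A_2\geq(\#A_1)/4$ does not work as stated. You correctly identify the problem: the Pl\"unnecke refinement $X^\prime$ of $X=d_2A_1$ is a $\delta$-covering refinement, so it only gives $\covering_{\delta/|d_2|}(A_2)\geq \covering_{\delta/|d_2|}(A_1)/2$, which is weaker than a cardinality bound when $|d_2|$ is small. But non-concentration cannot repair this: it provides an \emph{upper} bound on $\#(A_1\cap I)$ for a $\delta/|d_2|$-interval $I$, and knowing that no interval is too crowded does not prevent the refinement from discarding precisely those intervals containing most of the points of $A_1$. What actually saves the situation is a \emph{lower} bound on how many $A_1$-points each selected interval holds, and this is supplied not by non-concentration but by the uniformity of $A_1$ built in by Lemma \ref{Treeprocessing}: the output $\tilde X$ of the tree-processing algorithm is ``essentially uniform,'' meaning at each dyadic scale $2^{-lj}$ the selected boxes each contain comparably many points (within a factor of $2$). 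Because $A_1$ is obtained by applying Lemma \ref{Treeprocessing}, a covering-number refinement of $A_1$ at any dyadic scale automatically becomes a cardinality refinement up to $\delta^{-O(\eps)}$ losses. You should cite that uniformity property of $A_1$ rather than the non-concentration hypothesis; as written, the stated bridge is the wrong tool and the step would not go through.
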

\begin{proof}
As in the proof of Lemma \ref{sizeOfd1d2A}, select a refinement $A^{\prime \prime}$ of $A^{\prime}$ so that 
$\covering_\delta(bA^{\prime \prime}+d_1A^{\prime}+d_2A^{\prime}) \lesssim K^{12}\rho^4(\#A^{\prime})$. This implies that
$$
\covering_\delta(bA^{\prime \prime}+d_1A_1+d_2A_1) \lesssim \delta^{-O(\epsilon)} K^{12}\rho^3(\#A_1).
$$
 We have that 
 $$\covering_\delta(d_2A_1+d_2A_1) =\covering_{{\delta \over d_2}}(A_1+A_1) \leq K \delta^{-O(\epsilon)}
 \rho^{-1}|\log\delta|\ \covering_\delta(d_2 A_1),$$ 
 and thus by Corollary \ref{discretizedPlunnecke}, there is a refinement $A_1^\prime$ of $A_1$ so that the $k$-fold sum 
 $$\covering_\delta(d_2A_1^\prime+\ldots+d_2A_1^\prime) \leq K^{k-1}\delta^{-O(\epsilon))}\rho^{1-k}|\log\delta|^{k-1}\covering_\delta(d_2A_1^\prime).$$
 Apply Corollary \ref{discretizedPlunnecke} with $X= d_2A_1^\prime$, $Y_1 = d_2A_1^\prime+\ldots+d_2A_1^{\prime}$ (this is a $(k-1)$-fold sum), and $Y_2 = bA^{\prime \prime}+d_1A_1$. We conclude that there is a refinement $A_2$ of $A_1^\prime$ so that 
\begin{equation*}
\begin{split}
\covering_\delta(bA^{\prime \prime}+d_1A_2+d_2A_2+\ldots+d_2A_2) & \lesssim (K^{12}\rho^3)\delta^{-O(\epsilon)}(K^{k-1}\rho^{1-k}|\log\delta|^{k-1}) (\#A_1)\\
& =K^{11+k}\rho^{4-k} \delta^{-O(\epsilon)}   |\log\delta|^{k-1} (\#A_1).
\end{split}
\end{equation*}
We now proceed as in the proof of Lemma \ref{sizeOfd1d2A}. Observe that $d_1A_2 + d_2A_2 + \ldots\ + d_2A$ is contained in an interval of length $d \lesssim (k+1)\max(|d_1|,|d_2|)$. Thus
\begin{equation*}
\begin{split}
\covering_\delta(d_1A_2 + d_2A_2 +\ldots + d_2A_2)&\lesssim \big(\covering_d(bA^{\prime \prime})\big)^{-1}\ \covering_\delta(bA^{\prime \prime}+d_1A_2+ d_2A_2+\ldots+d_2A_2)\\
&\lesssim \big(\covering_d(bA^{\prime \prime})\big)^{-1}K^{11+k}\rho^{4-k} \delta^{-O(\epsilon)} |\log\delta|^{O(1)}(\#A_1).
\end{split}
\end{equation*}

Since $\#A^{\prime \prime}  \sim \#A^{\prime}$ and 
$A^{\prime}$ satisfies the non-concentration estimate $\#(A^{\prime} \cap I)\leq C\delta^{-\eps} d^{\sigma}(\#A^{\prime})$ for each interval $I$ of length $d$, we must have that 
$\covering_d(b A^{\prime \prime}) \gtrsim \frac{\#A^{\prime \prime}}{C \delta^{-\eps}d^{\sigma}(\#A^{\prime})}=C^{-1}\delta^{\eps} d^{-\sigma}$. Since $\#A^\prime\geq\delta^{\eps}\#A_1$, we have
\begin{equation*}
\begin{split}
\covering_\delta(d_1A_2 + d_2A_2 +\ldots + d_2A_2) & \lesssim C \delta^{-O(\epsilon)}|\log\delta|^{O(1)}d^\sigma K^{11+k}\rho^{4-k} (\#A^\prime).\qedhere
\end{split}
\end{equation*}
\end{proof}

\section{The structure of $\frac{A-A}{A-A}$: dense versus gap cases}
Let $A_1$ be the set constructed in the previous section, and let $\gamma\in (0,1/2)$ be a parameter we will specify later.  Consider the set

$$ 
B = \Big\{ \frac{a_1 - a_2}{a_3 - a_4} : a_i \in A_1, |a_3 - a_4| > \delta^\gamma \Big\}. 
$$

\noindent Since $A_1\subset [1,2]$, we have that $B\subset [-\delta^{-\gamma},\delta^{-\gamma}].$ We also have $0,1\in B$. Choose a positive integer $m$ so that $2^{-m}\sim\delta^{1-2\gamma}$. Define $s= 2^{-m}$. 

%
%
\begin{lemma}\label{denseVsGapLemma}
At least one of the following two things must happen.
\begin{itemize}
\item[(A):] There exists a point $b\in B\cap [0,1]$ with
$$
\max\Big(\operatorname{dist}(b/2,\ B),\ \ \operatorname{dist}\Big(\frac{b+1}{2},\ B\Big)\Big)\geq s.
$$
\item[(B):] $\mathcal{E}_s(B\cap [0,1]) \gtrsim s^{-1}.$
\end{itemize}
\end{lemma}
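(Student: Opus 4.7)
The plan is to prove the contrapositive: assume (A) fails and deduce (B). Under this assumption, for every $b\in B\cap[0,1]$ both of the points $T_0(b):=b/2$ and $T_1(b):=(b+1)/2$ lie within distance $s$ of $B$. My goal is to show that $B\cap[0,1]$ is $O(s)$-dense in $[0,1]$, which immediately gives $\mathcal{E}_s(B\cap[0,1])\gtrsim s^{-1}$.

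The key observation is that the maps $T_0,T_1$ are $\tfrac{1}{2}$-Lipschitz contractions of $[0,1]$ into itself, and their composition structure encodes binary expansions: for any $x\in[0,1]$ with expansion $x=0.b_1b_2b_3\ldots$ and any $y\in[0,1]$, the iterate $T_{b_1}\circ T_{b_2}\circ\cdots\circ T_{b_m}(y)$ lies in the dyadic interval of length $2^{-m}=s$ containing $x$, and is therefore within $s$ of $x$. Fix such an $x$. I would inductively build points $y_k\in B\cap[0,1]$ tracking $z_k:=T_{b_{m-k+1}}\circ\cdots\circ T_{b_m}(1)$, starting from $y_0=z_0=1\in B\cap[0,1]$. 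At step $k$, form $T_{b_{m-k+1}}(y_{k-1})\in[0,1]$; the failure of (A) applied to $y_{k-1}$ produces some $b\in B$ within $s$ of this point. If $b\in[0,1]$ set $y_k=b$; otherwise snap to $y_k=0$ or $y_k=1$ according to which side of $[0,1]$ the point $b$ fell on, using that $0,1\in B\cap[0,1]$. A short calculation using contractivity gives the recursion $|y_k-z_k|\leq s+|y_{k-1}-z_{k-1}|/2$, hence $|y_m-z_m|\leq 2s$ and $|y_m-x|\leq 3s$. Since $x$ was arbitrary, $B\cap[0,1]$ is $3s$-dense in $[0,1]$, which forces $\mathcal{E}_s(B\cap[0,1])\gtrsim s^{-1}$.

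The main obstacle is precisely this boundary effect: the point $b\in B$ produced by the failure of (A) need not lie in $[0,1]$, which would stop a naive induction from proceeding. The snapping device resolves this because the snap is triggered only when $T_{b_{m-k+1}}(y_{k-1})$ lies within $s$ of the boundary of $[0,1]$; then $z_k$ is within $s+|y_{k-1}-z_{k-1}|/2$ of that same boundary, so replacing $b$ by the nearby endpoint $0$ or $1\in B$ costs no more in the error bound than the naive choice would have. Exploiting the fact that $\{0,1\}\subset B$ is what keeps the argument clean and the constants uniform.
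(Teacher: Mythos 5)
Your proof is correct and takes essentially the same approach as the paper: both exploit the stability of a neighborhood of $B$ under the maps $T_0(b)=b/2$ and $T_1(b)=(b+1)/2$ (once (A) fails) and iterate from the seeds $0,1\in B$ to get $O(s)$-density in $[0,1]$. The paper presents the iteration as an induction showing every dyadic rational $p/2^n$ ($n\leq m-1$) lies in $\mathcal{N}_{2s}(B)$, whereas you track the binary expansion of an arbitrary target $x$ with a geometric error bound; your explicit snapping to the endpoints is a clean way to handle the boundary issue that the paper sidesteps by staying on the dyadic grid, where points at distance $>2s$ from $\{0,1\}$ stay interior automatically.
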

\begin{proof}

%
Suppose that Item (A) does not occur. Let $\tilde b\in B_{2s}$ and let $b\in B$ be the corresponding point with $|b-\tilde b|\leq 2s$. Then there is an element $b^\prime\in B$ with $|b^\prime-b/2|<s$, and thus 
$$
|b^\prime-\tilde b/2|\leq |b^\prime-b/2| + |b/2-\tilde b/2| < s+s=2s,
$$
so $\tilde b/2\in B_{2s}$. 

Similarly, there is an element $b^{\prime\prime}\in B$ with $|b^{\prime\prime}-\frac{b+1}{2}|<s$, and thus 
$$
\Big|b^{\prime\prime}-\frac{\tilde b+1}{2} \Big|\leq \Big|b^{\prime\prime}-\frac{b+1}{2}\Big| + \Big|\frac{b+1}{2}-\frac{\tilde b+1}{2}\Big| < s+s=2s,
$$
so $\frac{\tilde b+1}{2}\in B_{2s}$. 

We will now prove by induction that for each $ n = 1,\ldots,m-1,$ every dyadic rational of the form $p/2^n \in[0,1]$ is contained in $B_{2s}$. Indeed, if $n=0$ then the result holds since $0,1\in B$ implies that $0,1\in B_{2s}$. Now suppose the result has been proved for some value of $n\leq m-2$, and let $p/2^{n+1}\in[0,1]$. If $p<2^n$, then by the induction hypothesis $p/2^{n}\in B_{2s}$, and thus $\frac{1}{2}p/2^{n}=p/2^{n+1}\in B_{2s}$. If $p\geq 2^n$, then by the induction hypothesis, $(p-2^n)/2^{n}=p/2^n-1\in B_{2s}$, and thus $p/2^{n+1} = \frac{p/2^n-1}{2}+\frac{1}{2}\in B_{2s}$. This completes the induction.  We conclude that Item (B) holds. 
\end{proof}

We say we are in the \emph{gap case} if Item (A) holds and in the \emph{dense case} if Item (B) holds. Note that these are not mutually exclusive.
\section{The dense case}
By pigeonholing, we can select $b_1, b_2, b_3, b_4 \in A_1$ with $|b_3 - b_4| > \delta^\gamma$ and $|b_1-b_2|\leq |b_3-b_4|$ so that

\begin{equation} \label{closebsparse} \#\Big\{ (a_1, ..., a_4) \in A_1^4\colon \Big| \frac{a_1 - a_2}{a_3 - a_4} - \frac{b_1 - b_2}{b_3 - b_4}\Big| < \delta^{1 - 2\gamma},\  |a_3 - a_4| > \delta^{\gamma} \Big\}  \lesssim 
(\#A_1)^4 \delta^{1-2\gamma}.
\end{equation}

By Lemma \ref{sizeOfd1d2A}, we have

\begin{equation}\label{upperBoundOnd1Ad2A}
\covering_\delta\big((b_1 - b_2) A_1 + (b_3 - b_4) A_1\big) \leq \covering_\delta\big((b_1-b_2)A^{\prime}+(b_3-b_4)A^{\prime}\big) \lesssim  C \delta^{-\eps}K^{12} \rho^4 |b_3-b_4|^\sigma (\#A^\prime).
\end{equation}

We will now establish a lower bound on $\covering_\delta\big((b_1 - b_2) A_1 + (b_3 - b_4) A_1\big)$. Define $Q \subset A_1^4$ to be the set of quadruples obeying

\begin{equation} \label{quad} (b_3 - b_4) a_1 + (b_1 - b_2) a_4 = (b_3 - b_4) a_2 + (b_1 - b_2) a_3 + O(\delta). \end{equation}
\noindent Cauchy-Schwarz gives $ \covering_\delta \big(  (b_1 - b_2) A_1+(b_3 - b_4) A_1\big)\gtrsim  (\#A_1)^4 / (\#Q)$, so our goal is now to find an upper bound for $Q$.  Note that \eqref{quad} implies that

$$ a_1 + \frac{b_1 - b_2}{b_3 - b_4} a_4 = a_2 + \frac{b_1 - b_2}{b_3 - b_4} a_3 + O(\delta |b_3 - b_4|^{-1}), $$

which implies that

\begin{equation}\label{smallDifference} 
\left| \frac{a_1 - a_2}{a_3 - a_4} - \frac{b_1 - b_2}{b_3 - b_4} \right| \lesssim \delta |b_3 - b_4|^{-1} |a_2 - a_4|^{-1}.
\end{equation}

We first consider quadruples $(a_1, ..., a_4) \in Q$ where $|a_3 - a_4| \ge \delta^{\gamma}$.  For each such quadruple, \eqref{smallDifference} implies that

$$ \left| \frac{a_1 - a_2}{a_3 - a_4} - \frac{b_1 - b_2}{b_3 - b_4} \right| \lesssim \delta^{1 - 2 \gamma}. $$

\noindent Comparing with \eqref{closebsparse}, we see that the number of such quadruples is $\lesssim (\#A_1)^4 \delta^{1- 2 \gamma}\lesssim  (\#A_1)^3 \delta^{(1-\sigma)-2\gamma-O(\epsilon)}$. Thus if at least half the quadruples from $Q$ are of this form, then 
\begin{equation*}
\covering_\delta\big( (b_1 - b_2)A_1 + (b_3-b_4)A_1\big) \gtrsim (\#A_1)\delta^{2\gamma+\sigma-1+O(\epsilon)}\gtrsim \rho|\log\delta|^{-1}\delta^{2\gamma+\sigma-1} (\#A^{\prime}).
\end{equation*}
By \eqref{upperBoundOnd1Ad2A}, we conclude that if at least half the quadruples from $Q$ are of this form, then
\begin{equation}\label{bound1OnK}
K \gtrsim   (C|\log\delta|)^{-O(1)} \rho^{-1/4} \delta^{\frac{2\gamma+\sigma-1}{12}+O(\epsilon)}\gtrsim (C|\log\delta|)^{-O(1)} \delta^{\frac{2\gamma+\sigma-1}{12}+O(\epsilon)}.
\end{equation} 

On the other hand, we consider quadruples  $(a_1, ..., a_4) \in Q$ where $|a_3 - a_4| \le \delta^{\gamma}$. We begin by choosing elements $a_1,a_4\in A_1$. By our non-concentration hypothesis \eqref{nonConcentrationEstimate} and the requirement $|a_3-a_4|\leq\delta^{\gamma}$, the number of admissible $a_3$ is at most $C\delta^{\gamma\sigma}(\#A)$. Next, $a_2$ must lie in an interval of length $\leq \delta|b_3-b_4|^{-1}$. By our non-concentration hypothesis, the number of admissible $a_2$ is at most $C \delta^{\sigma}(\#A)|b_3-b_4|^{-\sigma}=C|b_3-b_4|^{-\sigma}$. Thus the set of quadruples of this type has size at most $\big(\#A_1\big)^2\big(C\delta^{\gamma\sigma}(\#A)\big)\big(C|b_3-b_4|^{-\sigma}\big)$. Thus if at least half the quadruples from $Q$ are of this form, then 
\begin{equation*}
\begin{split}
\covering_\delta\big((b_1 - b_2)A_1 + (b_3-b_4)A_1\big)&\gtrsim  \frac{(\#A_1)^4}{\big(\#A_1\big)^2\big(C\delta^{\gamma\sigma}(\#A)\big)\big(C|b_3-b_4|^{-\sigma}\big)}\\
&\gtrsim  C^{-O(1)}\rho^2|\log\delta|^{-1}\delta^{-\gamma\sigma+O(\epsilon)}|b_3-b_4|^{\sigma}(\#A^{\prime}).
\end{split}
\end{equation*}
By \eqref{upperBoundOnd1Ad2A}, we conclude that if at least half the quadruples from $Q$ are of this form, then
\begin{equation}\label{bound2OnK}
K \gtrsim  (C|\log\delta|)^{-O(1)} \rho^{-1/6} \delta^{\frac{-\gamma\sigma}{12}+O(\epsilon)}\gtrsim (C|\log\delta|)^{-O(1)}  \delta^{\frac{-\gamma\sigma}{12}+O(\epsilon)}.
\end{equation} 
If we are in the dense case, then 
\begin{equation}\label{gapCaseBound}
K\gtrsim \delta^{O(\epsilon)}(C|\log\delta|)^{-O(1)}\min\big(\delta^{\frac{2\gamma+\sigma-1}{12}},\ \delta^{\frac{-\gamma\sigma}{12}}   \big).
\end{equation}

\section{The gap case}
In this section, we will suppose that we are in the gap case. This means that there exists $b=\frac{b_1-b_2}{b_3-b_4}\in B\cap [0,1]$ so that either (A.1): $b/2$ is at least $s$-separated from $B$ or (A.2): $\frac{b+1}{2}$ is at least $s$-separated from $B$ (recall $s \sim \delta^{1-2\gamma}$). The reader should recall that by the definition of $B$, we have that $|b_3-b_4| \geq \delta^{\gamma}$.
In Case (A.1), write $b/2=e_1/e_2$, while in Case (A.2), write $\frac{b+1}{2}$ as $e_1/e_2$. In Case (A.1) we can write $e_1 = d_1,\ e_2 = d_2+d_2$, where $d_1,d_2\in A_1-A_1$. In Case (A.2) we can write $e_1 = d_1 + d_2,\ e_2 = d_2+d_2$, where $d_1,d_2\in A_1-A_1$. 

We will prove a lower bound on $\covering_\delta(e_1 A_1 + e_2 A_1)$. Define $Q \subset A_1^4$ to be the set of quadruples obeying

\begin{equation} \label{quads} 
e_2 a_1 + e_1 a_4 = e_2 a_2 + e_1 a_3 + O(\delta). 
\end{equation}

Cauchy-Schwarz gives $\covering_\delta(e_1 A_1 + e_2 A_1) \gtrsim (\#A_1)^4 / (\#Q)$, so our goal is now to find an upper bound for $Q$.  Note that (\ref{quads}) implies that

\begin{equation}\label{re-arranged_quads} 
a_1 + \frac{e_1}{e_2} a_4 = a_2 + \frac{e_1}{e_2} a_3 + O\big(\delta |e_2|^{-1}\big),
\end{equation}
which implies that

\begin{equation}\label{farFromD1D2}
\left| \frac{a_1 - a_2}{a_3 - a_4} - \frac{e_1}{e_2} \right| \lesssim \delta |e_2|^{-1} |a_2 - a_4|^{-1}.
\end{equation}

We first consider quadruples $(a_1, ..., a_4) \in Q$ where $|a_3 - a_4| \ge \delta^\gamma$.  For each such quadruple, \eqref{farFromD1D2} implies that

$$ \left| \frac{a_1 - a_2}{a_3 - a_4} - \frac{e_1}{e_2} \right| \lesssim \delta^{1 - 2 \gamma}. $$
Now since we are in the gap case, $e_1/e_2$ is at least $s \sim \delta^{1-2\gamma}$ separated from $B$. But since $|a_3 - a_4| \geq \delta^\gamma$, $ \frac{a_1 - a_2}{a_3 - a_4} \in B$.  This shows that there are no quadruples in $Q$ with $|a_3 - a_4| \geq \delta^\gamma$.  

We conclude that every quadruple in $Q$ has $|a_3 - a_4| \leq \delta^\gamma$. Select elements $a_1,a_4\in A_1$. By the non-concentration hypothesis \eqref{nonConcentrationEstimate}, the set of admissible $a_3$ has size at most $C\delta^{\gamma\sigma}(\#A)$. Finally, $a_2$ must lie in an interval of length $\delta|e_2|^{-1}$; again by the non-concentration hypothesis, the set of admissible $a_2$ has size at most  $C(\delta/|e_2|)^{\sigma}(\#A^{\prime})= C |e_2|^{-\sigma}$. All together, we have

$$ |Q| \lesssim C^{O(1)} (\#A_1)^2 (\#A^{\prime}) \delta^{\gamma\sigma} |e_2|^{-\sigma}. $$
This gives us the lower bound

\begin{equation}\label{lowerBoundOnd1d2A}
\covering_\delta(e_1 A_1 + e_2 A_1) \gtrsim C^{-O(1)}\rho^2 \delta^{O(\epsilon)}|\log\delta|^{-2}\delta^{-\gamma\sigma}|e_2|^{\sigma} (\#A^{\prime}).
\end{equation}

Note that nothing in the argument obtaining this lower bound would change if we replaced $A_1$ by a refinement of $A_1$. We conclude that 

\begin{equation}\label{lowerBoundOnd1d2A2}
\covering_\delta(e_1 A_2 + e_2 A_2) \gtrsim C^{-O(1)}\rho^2 \delta^{O(\epsilon)} |\log\delta|^{-2}\delta^{-\gamma\sigma}|e_2|^{\sigma} (\#A^{\prime})
\end{equation}
whenever $A_2$ is a refinement of $A_1$. Applying Lemma \ref{sizeOfd1d2AIterated} with $d_1 = b_1-b_2$, $d_2 = b_3-b_4$, and $k=2$ (in Case (A.1)) and $k=3$ (in Case (A.2), which is worse), we obtain the bound
\begin{equation}\label{upperBoundOnd1d2A}
\covering_\delta(e_1 A_2 + e_2 A_2)\lesssim C \delta^{-O(\epsilon)} |\log\delta|^{O(1)}K^{14}\rho^2 |e_2|^\sigma(\#A^{\prime}).
\end{equation}

Combining \eqref{lowerBoundOnd1d2A} and \eqref{upperBoundOnd1d2A}, we have
$$
C^{-O(1)}\rho^2 \delta^{O(\epsilon)} |\log\delta|^{-1}\delta^{-\gamma\sigma}|e_2|^{\sigma}(\#A^{\prime})\lesssim C \delta^{-O(\epsilon)} |\log\delta|^{O(1)}K^{14}\rho^2 |e_2|^{\sigma}(\#A^{\prime}),
$$
and thus

\begin{equation}\label{KBoundGap}
K \gtrsim \delta^{O(\epsilon)} (C|\log\delta|)^{-O(1)}\delta^{\frac{-\gamma\sigma}{14}}.
\end{equation}

By Lemma \ref{denseVsGapLemma}, at least one of \eqref{gapCaseBound} or \eqref{KBoundGap} must hold. Selecting $\gamma = \frac{7(1-\sigma)}{2(7+3\sigma)},$ we conclude that 

$$
K \gtrsim \delta^{O(\epsilon)} (C|\log\delta|)^{O(1)}\delta^{-\frac{\sigma(1-\sigma)}{4(7+3\sigma)}}.
$$
Since
$$
\covering_\delta(A^\prime+A^\prime)+\covering_\delta(A^\prime.A^\prime) \leq \covering_\delta(A+A)+\covering_\delta(A.A),
$$
we conclude that
$$
\covering_\delta(A+A)+\covering_\delta(A.A)\gtrsim \delta^{O(\epsilon)} (C|\log\delta|)^{O(1)}\delta^{-\frac{\sigma(1-\sigma)}{4(7+3\sigma)}} (\#A).
$$
Thus if $\eps>0$ is selected sufficiently small (depending only on $c$ and $\sigma$), then \eqref{largeSumProductDisc} holds. 


\end{document}